\numberwithin{equation}{section}
\theoremstyle{plain}
\newtheorem{theorem}{Theorem}[section]
\newtheorem{lemma}[theorem]{Lemma}
\newtheorem{prop}[theorem]{Proposition}
\newtheorem{cor}[theorem]{Corollary}
\theoremstyle{definition}
\newtheorem{definition}[theorem]{Definition}
\newtheorem{rem}[theorem]{Remark}
\newtheorem{con}[theorem]{Conjecture}
\renewcommand{\phi}{\varphi}
\renewcommand{\epsilon}{\varepsilon}
\newcommand{\mb}[1]{\mathbf{#1}}
\renewcommand{\vec}[1]{\mathbf{#1}}
\newcommand{\bb}[1]{\mathbb{#1}}
\newcommand{\mc}[1]{\mathcal{#1}}
\newcommand{\A}{\mb{A}}
\newcommand{\B}{\mb{B}}
\newcommand{\C}{\mb{C}}
\newcommand{\D}{\mb{D}}
\renewcommand{\P}{\mb{P}}
\newcommand{\M}{\mb{M}}
\newcommand{\n}{\mb{n}}
\newcommand{\U}{\mb{U}}
\newcommand{\Q}{\mb{Q}}
\newcommand{\x}{\vec{x}}
\newcommand{\0}{\mb{0}}
\newcommand{\1}{\mb{1}}
\newcommand{\2}{\mb{2}}
\newcommand{\PI}{\mb{3}^\text{p}}
\newcommand{\TC}{\mb{3}^\text{c}}
\newcommand{\bdot}{\boldsymbol{\cdot}}
\renewcommand{\emptyset}{\varnothing}
\newcommand{\meet}{\wedge}
\newcommand{\join}{\vee}
\DeclareMathOperator{\glb}{glb}
\DeclareMathOperator{\ds}{ds}
\DeclareMathOperator{\cd}{cd}
\DeclareMathOperator{\dnd}{dnd}
\DeclareMathOperator{\pid}{pid}
\DeclareMathOperator{\id}{id}
\DeclareMathOperator{\emd}{emd}
\title{Satisfiability degrees for BCK-algebras}
\title{Satisfiability degrees for BCK-algebras}
\author{C. Matthew Evans}
\date{}
\begin{document}

\maketitle

\begin{abstract}
We investigate the satisfiability degree of some equations in finite BCK-algebras; that is, given a finite BCK-algebra and an equation in the language of BCK-algebras, what is the probability that elements chosen uniformly randomly with replacement satisfy that equation?

Specifically we consider the equations for the excluded middle, double negation, commutativity, positive implicativity, and implicativity. We give a sufficient condition for an equation to have a finite satisfiability gap among commutative BCK-algebras, and prove that the law of the excluded middle has a gap of $\frac{1}{3}$, while the positive implicative and implicative equations have gap $\frac{1}{9}$. More generally, though, in the language of BCK-algebras, we show that double negation, commutativity, positive implicativity, and implicativity all fail to have finite satisfiability gap. We provide bounds for the probabilities in these cases.
\end{abstract}

\section{Introduction}

A fascinating result of Gustafson \cite{gustafson73} states that in a finite non-Abelian group $G$, the probability that two elements commute is at most $\frac{5}{8}$. There is a substantial literature about commuting probabilities in finite groups; we refer the reader to the survey \cite{DNP13} by Das, Nath, and Pournaki, and the bibliography contained therein. One can generalize the commuting probability for groups in two natural directions:
\begin{enumerate}
\item What can we say about commuting probabilities in other algebraic systems?
\item What is the probability that an algebraic system satisfies some specified first-order formula?
\end{enumerate} 

For an example of (1), MacHale \cite{machale76} investigated commuting probability for finite rings. While less has been written about commuting probabilities in the case of rings, there is some recent work in \cite{BM13}, \cite{BMS13}, and \cite{BDN17}.

There are several examples of (2) in the case of groups. Probabilities for some commutator-like equations were considered in \cite{DJM20} and \cite{lescot95}, while \cite{kocsis20} studies probabilities for some equations generalizing the commutativity equation.

A recent paper of Bumpus and Kocsis \cite{BK22} generalizes in both directions by considering probability questions for the class of Heyting algebras, which are the algebraic semantics for intuitionistic logic. Inspired by their work, the present paper considers probabilities for some equations in the language of BCK-algebras, a class of algebraic structures introduced by Imai and Is\'{e}ki \cite{II66} which are the algebraic semantics for a non-classical logic having only implication. These probabilities are formalized as follows:

\begin{definition} Given a first-order language $\mathcal{L}$, a finite $\mathcal{L}$-structure $M$, and an $\mathcal{L}$-formula $\phi(x_1, x_2, \ldots x_n)$ in $n$ free variables, the quantity
\[\ds(\phi, M)=\frac{|\,\{\,(a_1, a_2,\ldots, a_n)\in M^n\mid \phi(a_1, a_2, \ldots a_n)\,\}|}{|M|^n}\] is the \emph{degree of satisfiability} of the formula $\phi$ in the structure $M$.
\end{definition}

\begin{definition} Let $T$ be a theory over a first-order language $\mathcal{L}$ and $\phi$ an $\mathcal{L}$-formula in $n$ free variables. We say that $\phi$ has \emph{finite satisfiability gap} $\epsilon$ in $T$ if there is a constant $\epsilon >0$ such that, for every finite model $M$ of $T$, either $\ds(\phi, M)=1$ or $\ds(\phi, M)\leq 1-\epsilon$\,.
\end{definition}

Taking Gustasfon's result as an example in the language of groups, the equation $xy=yx$ has finite satisfiability gap $\frac{3}{8}$: every finite group either has degree of satisfiability 1 (if it is Abelian) or no larger than $\frac{5}{8}$.

The paper is structured as follows: in the next section we define the class of BCK-algebras and introduce the equations we will consider throughout the paper. 

In section 3, we give sufficient conditions for an equation in the language of commutative BCK-algebras to have a finite satisfiability gap. In section 4 we consider two different equations in one variable: the law of the excluded middle and double negation. We show the law of the excluded middle has finite satisfiability gap using the result of section 3, but also show that double negation does not have finite satisfiability gap among bounded BCK-algebras. We find algebras realizing bounds on the probability for double negation. In section 5 we consider three different equations in two variables: commutativity, positive implicative, and implicative. None of these have a finite satisfiability gap in the language of BCK-algebras, and we gives algebras realizing bounds for these probabilities. However, the latter two equations do have finite satisfiability gap among commutative BCK-algebras.

Provided here is a summary of the various bounds discussed in the paper. Let $\A$ be a BCK-algebra of order $n$. Let $\cd(\A)$ denote the commuting degree, $\dnd(\A)$ the double negation degree, $\pid(\A)$ the positive implicative degree, and $\id(\A)$ the implicative degree.
\begin{itemize}
\item If $\A$ is non-commutative, then \[\frac{3n-2}{n^2}\leq \cd(\A)\leq \frac{n^2-2}{n^2}\,.\]
\item If $\A$ is non-commutative and bounded, then \[\frac{2}{n}\leq \dnd(\A)\leq \frac{n-1}{n}\,.\]
\item If $\A$ is not positive implicative, then \[\frac{4n-4}{n^2}\leq \pid(\A)\leq \frac{n^2-1}{n^2}\,.\]
\item If $\A$ is not implicative, then \[\frac{4n-4}{n^2}\leq \id(\A)\leq \frac{n^2-1}{n^2}\,.\]
\item If $\A$ is linear but not positive implicative, then \[\frac{n^2+3n-2}{2n^2}\leq \pid(\A)\leq \frac{n^2-1}{n^2}\,.\]
\item If $\A$ is linear but not implicative, then \[\frac{n^2+3n-2}{2n^2}\leq \id(\A)\leq \frac{n^2-1}{n^2}\,.\]
\end{itemize}

\section{Preliminaries}

\begin{definition}A \emph{BCK-algebra} is an algebra $\A=\langle A; \bdot, 0\rangle$ of type $(2,0)$ such that 
\begin{enumerate}
\item[(BCK1)] $\bigl[(x\bdot y)\bdot(x\bdot z)\bigr]\bdot(z\bdot y)=0$
\item[(BCK2)] $\bigl[x\bdot (x\bdot y)\bigr]\bdot y=0$
\item[(BCK3)] $x\bdot x=0$
\item[(BCK4)] $0\bdot x=0$
\item[(BCK5)] $x\bdot y=0$ and $y\bdot x=0$ imply $x=y$.
\end{enumerate} for all $x,y,z\in A$.
\end{definition}

These algebras are partially ordered by: $x\leq y$ if and only if $x\bdot y=0$. Note that 0 is the least element by (BCK4). One can also show that $x\bdot 0=x$ for all $x\in \A$. If the Hasse diagram for $\A$ is a chain, we will say $\A$ is \emph{linear}. A \emph{bounded BCK-algebra} is an algebra $\A=\langle A; \bdot, 0, 1\rangle$ of type $(2,0,0)$ such that $\langle A; \bdot, 0\rangle$ is a BCK-algebra and $x\bdot 1=0$ for all $x\in A$.

Define a term operation $\meet$ by $x\meet y:=y\bdot(y\bdot x)$. The element $x\meet y$ is a lower bound for $x$ and $y$, but in general is not the greatest lower bound of $x$ and $y$. As it turns out, $x\meet y =\glb\{x,y\}$  if and only if $x\meet y=y\meet x$. If this equation holds for all $x,y\in\A$, we say $\A$ is \emph{commutative}.

If $\A$ is bounded, define the term operation $\neg x:= 1\bdot x$. While we always have $\neg\neg\neg x=\neg x$, in general $\neg\neg x\leq x$. That is, we cannot always eliminate a double negative in a bounded BCK-algebra.

Finally, if $\A$ is bounded and commutative, define the term operation \[x\join y :=\neg(\neg x\meet \neg y)\,.\] We note that the term-reduct $\langle A; \meet, \join\rangle$ is a distributive lattice \cite{traczyk79}.

In this paper we consider the following equations:
\begin{enumerate}
\item[(DN)] $\neg\neg x=x$
\item[(EM)] $x\join\neg x=1$
\item[(T)] $x\meet y=y\meet x$
\item[(E$_1$)] $x\bdot y=(x\bdot y)\bdot y$
\item[(I)] $x\bdot(y\bdot x)=x$
\end{enumerate} If $\A$ satisfies (E$_1$) we say it is \emph{positive implicative}, and if $\A$ satisfies (I) we say it is \emph{implicative}. The classes of commutative, positive implicative, and implicative BCK-algebras are important subclasses of BCK-algebras; in fact, each of these subclasses is a variety. Further, a BCK-algebra is implicative if and only if it is both commutative and positive implicative.  For more detail on the elementary properties of BCK-algebras -- particularly the term operations $\neg$, $\meet$, and $\join$ -- we refer the reader to \cite{it76, it78, mj94}.

We now give methods for constructing new BCK-algebras which will be used throughout the paper. Let $\{\A_\lambda\}_{\lambda\in\Lambda}$ be a family of BCK-algebras such that $A_\lambda\cap A_\mu=\{0\}$ for $\lambda\neq \mu$, and let $U$ denote the union of the $A_\lambda$'s. Equip $U$ with the operation 
\[x\bdot y=\begin{cases}x\bdot_\lambda y &\text{if $x,y\in A_\lambda$}\\x&\text{otherwise}\end{cases}\,.\] Then $U$ is a BCK-algebra we will call the \emph{BCK-union} of the $\A_\lambda$'s and denote it by $\U=\bigsqcup_{\lambda\in\Lambda}\A_\lambda$. This construction first appears in \cite{it76}, though only in the case $|\Lambda|=2$. For a full proof that this is a BCK-algebra, see \cite{evans20}.

Next, given any BCK-algebra $\A$ of order $n-1$, we construct a new BCK-algebra of order $n$ by appending a new top element, call it $\top$, and extending the BCK-operation as follows:
\begin{align*}
x\bdot \top&=0\\
\top\bdot \top&=0\\
\top\bdot x&=\top
\end{align*} for all $x\in\A$. This is known as \emph{Is\'{e}ki's extension of $\A$}, which we will denote $\A\oplus\top$. Is\'{e}ki's extension always yields a bounded non-commutative BCK-algebra having $\A$ as a maximal ideal \cite{iseki75}.

There are three BCK-algebras of small order that will be used several times throughout the paper. We give their Cayley tables in Table \ref{tab:algs}.
\begin{table}[h]
{\centering
\begin{tabular}{c||cc}
$\bdot$  & 0              & 1 \\\hline\hline
0             & 0              & 0 \\
1             & 1              & 0
\end{tabular}
\hspace{1cm}
\begin{tabular}{c||ccc}
$\bdot$  & 0              & $1$ & $2$ \\\hline\hline
0             & 0              & 0 & 0 \\
$1$             & $1$              & 0 & 0 \\
$2$             & $2$              & $2$ & 0 
\end{tabular}
\hspace{1cm}
\begin{tabular}{c||ccc}
$\bdot$  & 0              & $1$ & $2$ \\\hline\hline
0             & 0              & 0 & 0 \\
$1$             & $1$              & 0 & 0 \\
$2$             & $2$              & $1$ & 0 
\end{tabular}
\caption{\label{tab:algs} The algebras $\2$, $\PI$, and $\TC$}
}
\end{table} The algebra $\2$ is the unique (up to isomorphism) BCK-algebra of order 2, and it is implicative. The algebra $\PI$ is positive implicative but not commutative. The algebra $\TC$ is commutative but not positive implicative. All three are linear.

We will also need the following family of algebras $\C_n$ (for $n\geq 2$) defined as follows: the carrier set is $C_n=\{0, 1,2, ,\ldots, n-1\}$ and the operation is \[x\bdot y=\max\{x-y, 0\}\,.\] These algebras are known to be linear, commutative BCK-algebras. Note that $\C_2=\2$ and $\C_3=\TC$.

Finally, we mention an important result of Romanowska and Traczyk.

\begin{theorem}[\cite{RT80}]\label{RT80}
Every finite commutative BCK-algebra is a product of BCK-chains.
\end{theorem}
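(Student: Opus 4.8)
The plan is to prove the statement through Birkhoff's subdirect representation theorem, which reduces the whole problem to identifying the subdirectly irreducible commutative BCK-algebras. Since the commutative BCK-algebras form a variety, every member is a subdirect product of subdirectly irreducible members of that variety, and when the algebra $\A$ is finite the index set may be taken finite with each factor a finite quotient of $\A$. It therefore suffices to prove two things: (a) that every subdirectly irreducible commutative BCK-algebra is a chain (a linear, hence totally ordered, commutative BCK-algebra), and (b) that the resulting subdirect representation can be organized into the product asserted in the statement. Step (a) is the conceptual core and step (b) is the technical crux.

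To carry out (a) I would first set up the congruence theory. The key input is that in a commutative BCK-algebra the congruences are in order-isomorphic bijection with the \emph{ideals}, where an ideal $I$ is a subset containing $0$ and closed under the rule that $y\in I$ together with $x\bdot y\in I$ forces $x\in I$; the congruence attached to $I$ identifies $x$ with $y$ exactly when both $x\bdot y\in I$ and $y\bdot x\in I$. Commutativity is precisely what makes this correspondence clean, since it forces the term operation $\meet$ to compute $\glb\{x,y\}$ and guarantees that the quotient again satisfies $x\meet y=y\meet x$. Under this dictionary, subdirect irreducibility becomes the statement that the lattice of ideals has a least nonzero element, i.e.\ a monolith.

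The hard part, and the step I expect to be the main obstacle, is to show that a commutative BCK-algebra $\A$ that is \emph{not} a chain fails to be subdirectly irreducible. Given incomparable elements $a,b$, so that $a\meet b$ is a common lower bound lying strictly below each of them, I would manufacture two nonzero ideals whose intersection is $\{0\}$: informally, one ideal concentrated ``around $a$'' and one ``around $b$'', built by exploiting the fact (cited above from \cite{traczyk79} in the bounded case, and available as a meet-semilattice structure in general) that the order underlying a commutative BCK-algebra is distributive. Trivial intersection of the two ideals corresponds, under the dictionary of the previous paragraph, to two nonzero congruences meeting in the trivial congruence, which is incompatible with the existence of a monolith. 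This forces subdirectly irreducible $\Rightarrow$ totally ordered; the converse is immediate since every chain is a commutative BCK-algebra, so the subdirect irreducibles are exactly the chains. Pinning down the two separating ideals so that their intersection is genuinely $\{0\}$ is the delicate calculation here and is where I anticipate spending the most effort.

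Finally, for (b), Birkhoff's theorem now realizes $\A$ as a subdirect product of finitely many chains $\A_1,\dots,\A_k$. Promoting this to the literal product requires showing that the associated factor congruences are complementary and permutable, so that $\A\cong\prod_i \A/\theta_i$; this is where the finiteness hypothesis and the distributive lattice structure of the ideal lattice must be brought to bear, and I regard verifying the complementation of the factor congruences as the second significant obstacle, since a naive subdirect embedding need not split as an internal direct product without this extra input.
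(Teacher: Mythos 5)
You should know at the outset that the paper does not prove this statement at all --- it is imported from \cite{RT80} as a black box --- so there is no internal argument to measure your plan against; I can only assess it on its own terms. The decisive issue is that the statement as transcribed here, with no boundedness hypothesis, is false, and your step (b) is precisely where any attempted proof must break. Consider the three-element BCK-union $\2\sqcup\2$ with carrier $\{0,a,b\}$, where $a\bdot b=a$ and $b\bdot a=b$. The paper itself uses this construction and notes that such algebras are commutative; indeed $a\meet b=b\bdot(b\bdot a)=b\bdot b=0$ and $b\meet a=a\bdot(a\bdot b)=a\bdot a=0$, and all pairs involving $0$ or a repeated element commute trivially. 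But $\2\sqcup\2$ is not a product of BCK-chains: a product of nontrivial finite chains is either a single chain or has at least four elements, and in every case has a greatest element, whereas $\2\sqcup\2$ has three elements and two maximal elements.

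This example also shows concretely why the mechanism you propose for (b) cannot succeed. The map $0\mapsto(0,0)$, $a\mapsto(1,0)$, $b\mapsto(0,1)$ exhibits $\2\sqcup\2$ as a subdirect product inside $\2\times\2$, so Birkhoff's theorem and your step (a) go through exactly as you describe; the two projection kernels correspond to the ideals $\{0,a\}$ and $\{0,b\}$, which meet trivially and join to the top, i.e.\ they \emph{are} complements in the finite distributive ideal lattice. Nevertheless they do not permute (there is no $c$ with $b\,\theta_1\,c\,\theta_2\,a$), and the subdirect embedding does not split, since $3\neq 2\cdot 2$. So ``finiteness plus distributivity of the congruence lattice yields complementary, permuting factor congruences'' is provably not a valid inference --- the same failure occurs for finite distributive lattices, every one of which is a subdirect power of the two-element chain while only the Boolean ones are direct powers. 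The version of the theorem that can actually be proved (and the one needed where the paper applies it to bounded algebras, as in Section \ref{sec:emd}) adds the hypothesis that the algebra is bounded, equivalently for finite algebras directed; such algebras are term-equivalent to finite MV-algebras. Under that hypothesis your outline is viable: step (a) is the standard route via the ideal--congruence correspondence and the identity $(a\bdot b)\meet(b\bdot a)=0$ for incomparable $a,b$, while step (b) requires showing the ideal lattice is Boolean, for which the constant $1$ and the operation $\neg$ are indispensable rather than optional. One smaller point: your remark that ``the subdirect irreducibles are exactly the chains'' because ``every chain is a commutative BCK-algebra'' is a non sequitur; the converse inclusion needs its own argument (finite commutative BCK-chains are in fact simple), though only the forward direction is used in the proof.
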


\section{Sufficient conditions for finite satisfiability gap}

In this section we give sufficient conditions for an equation to have a finite satisfiability gap in the language of commutative BCK-algebras. Suppose $\phi(x_1, \ldots, x_k)$ is an equation in $k$ free variables in the language of commutative BCK-algebras. For a finite BCK-algebra $\A$, let \[S(\A)=\{(a_1,\ldots ,a_k)\in \A^k\,\mid\, \phi^\A(a_1,\ldots, a_k)\}\] so that \[\ds(\phi, \A)=\frac{|S(\A)|}{|\A|^k}\,.\]

\begin{lemma}\label{ds is mult}
Degree of satisfiability is multiplicative; that is, $\ds(\phi, \A\times\B)=\ds(\phi,\A)\cdot\ds(\phi,\B)$.
\end{lemma}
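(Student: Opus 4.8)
The plan is to exhibit an explicit bijection between the solution set $S(\A\times\B)$ and the Cartesian product $S(\A)\times S(\B)$, from which multiplicativity of $\ds$ follows by counting. The conceptual engine is the standard fact that term operations in a direct product are computed coordinatewise: writing $\phi$ as the equation $s=t$ for terms $s,t$ in the variables $x_1,\ldots,x_k$, for any tuple $\bigl((a_1,b_1),\ldots,(a_k,b_k)\bigr)\in(\A\times\B)^k$ we have
\[
s^{\A\times\B}\bigl((a_1,b_1),\ldots,(a_k,b_k)\bigr)=\bigl(s^\A(a_1,\ldots,a_k),\,s^\B(b_1,\ldots,b_k)\bigr),
\]
and likewise for $t$. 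I would first record this, either as a one-line appeal to the general universal-algebra principle or, if a self-contained argument is preferred, by a routine induction on the structure of $s$ and $t$ using the coordinatewise definition of $\bdot$ and of the constant $0$ in $\A\times\B$.

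Next I would use this to characterize membership in $S(\A\times\B)$. Since two pairs in $\A\times\B$ are equal exactly when they agree in each coordinate, the displayed identity shows that $\phi^{\A\times\B}$ holds at $\bigl((a_1,b_1),\ldots,(a_k,b_k)\bigr)$ if and only if $\phi^\A$ holds at $(a_1,\ldots,a_k)$ and $\phi^\B$ holds at $(b_1,\ldots,b_k)$. Equivalently, a tuple lies in $S(\A\times\B)$ precisely when its $\A$-coordinates form a tuple in $S(\A)$ and its $\B$-coordinates form a tuple in $S(\B)$.

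Then I would make the bijection explicit. The ``unshuffling'' map
\[
\bigl((a_1,b_1),\ldots,(a_k,b_k)\bigr)\;\longmapsto\;\bigl((a_1,\ldots,a_k),(b_1,\ldots,b_k)\bigr)
\]
is a bijection from $(\A\times\B)^k$ onto $\A^k\times\B^k$, and by the previous paragraph it restricts to a bijection from $S(\A\times\B)$ onto $S(\A)\times S(\B)$. Counting gives $|S(\A\times\B)|=|S(\A)|\cdot|S(\B)|$, and combining this with $|\A\times\B|^k=(|\A|\,|\B|)^k=|\A|^k|\B|^k$ yields
\[
\ds(\phi,\A\times\B)=\frac{|S(\A\times\B)|}{|\A\times\B|^k}=\frac{|S(\A)|\,|S(\B)|}{|\A|^k|\B|^k}=\ds(\phi,\A)\cdot\ds(\phi,\B).
\]

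There is no genuine obstacle here; the only point requiring care is the index bookkeeping in the reindexing map, since a $k$-tuple of pairs must be transposed into a pair of $k$-tuples. If one wished to avoid invoking the general coordinatewise-evaluation principle, the only added cost would be spelling out the induction on terms, which for the two-operation BCK signature is immediate.
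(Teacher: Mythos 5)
Your proposal is correct and follows essentially the same route as the paper: both arguments reduce membership in $S(\A\times\B)$ to coordinatewise satisfaction via the fact that terms are evaluated componentwise in a direct product, identify $S(\A\times\B)$ with $S(\A)\times S(\B)$ by the unshuffling bijection, and then count. Your version merely makes explicit the term-induction and the reindexing bijection that the paper leaves implicit in its chain of equivalences.
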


\begin{proof}
Note that 
\begin{align*}
\bigl((a_1,b_1), \ldots, (a_k,b_k)\bigr)\in S(\A\times\B) &\Longleftrightarrow \phi^{\A\times\B}\bigl((a_1,b_1),\ldots, (a_k,b_k)\bigr)\\
&\Longleftrightarrow \bigl(\phi^\A(a_1,\ldots, a_k)\,,\,\phi^\B(b_1,\ldots, b_k)\bigr)\\\
&\Longleftrightarrow (a_1,\ldots ,a_k)\in S(\A) \text{ and } (b_1,\ldots, b_k)\in S(\B)\\ 
&\Longleftrightarrow \bigl((a_1,\ldots, a_k)\,,\,(b_1,\ldots, b_k)\bigr)\in S(\A)\times S(\B)\,
\end{align*} which tells us $|S(\A\times\B)|=|S(\A)\times S(\B)|=|S(\A)||S(\B)|$. Therefore, 
\[\ds(\phi,\A\times\B)=\frac{|S(\A\times\B)|}{|\A\times\B|^k}=\frac{|S(\A)||S(\B)|}{|\A|^k|\B|^k}=\ds(\phi,\A)\cdot\ds(\phi,\B)\,.\]
\end{proof}

\begin{theorem}\label{sufficient condition}
Suppose $\phi$ is an equation in the language of commutative BCK-algebras, and consider the sequence $d_n:=\ds(\phi, \C_n)$. Let \[\mathcal{D}_\phi=\{d_m\mid d_m<1\}\,.\] If $\mathcal{D}\phi$ has a maximum, say $d_{m_0}$, then $\phi$ has finite satisfiability gap $\epsilon=1-d_{m_0}$ among commutative BCK-algebras.
\end{theorem}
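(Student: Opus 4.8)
The plan is to combine the Romanowska--Traczyk structure theorem (Theorem~\ref{RT80}) with the multiplicativity of satisfiability degree (Lemma~\ref{ds is mult}). First I would pin down what the ``BCK-chains'' in Theorem~\ref{RT80} are: up to isomorphism the only finite commutative BCK-algebra that is linear is $\C_n$. Indeed, on a totally ordered carrier $0 = a_0 < a_1 < \cdots < a_{n-1}$ we have $a_i \bdot a_j = 0$ whenever $i \le j$, and commutativity of $\meet$ (equivalently $a_i \meet a_j = a_j \meet a_i = a_{\min(i,j)}$) together with the exchange identity $(x\bdot y)\bdot z = (x\bdot z)\bdot y$ rigidly forces $a_i \bdot a_j = a_{i-j}$ for $i > j$; this can be verified by induction along the chain, ruling out every alternative value. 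Granting this, any finite commutative BCK-algebra $\A$ decomposes as $\A \cong \C_{n_1} \times \cdots \times \C_{n_r}$.

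Next I would apply Lemma~\ref{ds is mult} repeatedly to obtain
\[
\ds(\phi, \A) \;=\; \prod_{i=1}^{r} \ds(\phi, \C_{n_i}) \;=\; \prod_{i=1}^{r} d_{n_i}\,,
\]
where each factor $d_{n_i}$ is a probability and hence lies in $[0,1]$. The argument then splits into two cases. If every factor equals $1$, the product is $1$ and $\ds(\phi, \A) = 1$. Otherwise some factor $d_{n_{i_0}}$ is strictly less than $1$, so $d_{n_{i_0}} \in \mathcal{D}_\phi$ and therefore $d_{n_{i_0}} \le d_{m_0}$ by maximality. Bounding every remaining factor by $1$ yields $\ds(\phi, \A) \le d_{n_{i_0}} \le d_{m_0} = 1 - \epsilon$. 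Since $d_{m_0} \in \mathcal{D}_\phi$ forces $\epsilon = 1 - d_{m_0} > 0$, this is exactly the required finite satisfiability gap.

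I expect the only genuinely delicate step to be the first one: justifying that the factors in the decomposition may be taken to be the specific chains $\C_n$, rather than some other family of totally ordered commutative BCK-algebras. Everything after that is short bookkeeping, whose one real content is the observation that a single sub-unity factor, combined with the uniform bound $d_{n_i} \le 1$ on the others, already pulls the whole product down to at most $d_{m_0}$. It is worth flagging that the hypothesis is precisely that $\mathcal{D}_\phi$ \emph{attains} its supremum: this attainment is what licenses the inequality $d_{n_{i_0}} \le d_{m_0}$, and without it one could only conclude $\ds(\phi, \A) < 1$ for each algebra rather than a single uniform gap $\epsilon$.
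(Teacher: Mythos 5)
Your proof is correct and follows essentially the same route as the paper: decompose $\A$ via the Romanowska--Traczyk theorem into a product of chains, apply Lemma~\ref{ds is mult} to write $\ds(\phi,\A)$ as a product of the $d_{n_i}$, and observe that a single factor below $1$ pulls the whole product down to at most $d_{m_0}$. The only difference is that you additionally sketch why the finite commutative BCK-chains are exactly the $\C_n$, a point the paper's proof passes over silently when it writes $\A\cong\C_{j_1}\times\cdots\times\C_{j_k}$.
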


\begin{proof}
Let $\A$ be a finite commutative BCK-algebra. By Theorem \ref{RT80} we have \[\A\cong \C_{j_1}\times\C_{j_2}\times\cdots\times \C_{j_k}\,,\] where each $j_i\geq 2$. Applying Lemma \ref{ds is mult}, we have \[\ds(\phi,\A)=\prod_{i=1}^k d_{j_i}\,.\] If $\{d_{j_i}\}_{i=1}^k\cap \mathcal{D}_\phi=\emptyset$, then $d_{j_i}=1$ for $1\leq i\leq k$, and we have $\ds(\phi, \A)=1$.

On the other hand, if $\{d_{j_i}\}_{i=1}^k\cap \mathcal{D}_\phi\neq\emptyset$, then for each $1\leq i\leq k$ we have either $d_{j_i}=1$ or $d_{j_i}\leq d_{m_0}$. So $\ds(\phi,\A)\leq d_{m_0}=1-\epsilon$, and $\phi$ has finite satisfiability gap $\epsilon$.
\end{proof}

\section{Equations in one variable}\label{sec:eqns 1 var}

\subsection{Excluded middle degree}\label{sec:emd}

Suppose that $\A$ is a bounded commutative BCK-algebra of order $n$ and put \[E(\A)=\{\, x\in\A\,\mid\, x\join\neg x=1\,\}\,.\]  Define the \emph{excluded middle degree of $\A$}, denoted $\emd(\A)$, to be the degree of satisfiability of the equation $x\join\neg x=1$; that is, 
\[\emd(\A)=\frac{|E(\A)|}{n}\,.\] This is just the probability that a randomly chosen element satisfies the law of the excluded middle.

 It is easy to check that $0\join\neg 0 =1=1\join\neg 1$, so $|E(\A)|\geq 2$. It is known that $E(\A)=\A$ if and only if $\A$ is positive implicative \cite{it78}, meaning $\emd(\A)=1$ if and only if $\A$ is positive implicative. We show that if $\A$ is not positive implicative, then $\emd(\A)\leq \frac{2}{3}$. That is, the law of the excluded middle has finite satisfiability gap $\frac{1}{3}$ in the language of bounded commutatutive BCK-algebras.

\begin{lemma}\label{emd C_n}
$\emd(\C_n)=\frac{2}{n}$
\end{lemma}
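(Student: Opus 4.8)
The plan is to compute $E(\C_n)$ explicitly by analyzing when a single element $x$ satisfies the law of the excluded middle in the chain $\C_n$. Recall that $\C_n = \{0,1,\ldots,n-1\}$ with $x\bdot y = \max\{x-y,0\}$; this is bounded with top element $1 = n-1$, and since it is a commutative BCK-algebra the operations $\neg$, $\meet$, and $\join$ are all well-defined. First I would unwind the term operations concretely on $\C_n$: we have $\neg x = (n-1)\bdot x = \max\{n-1-x,\,0\} = n-1-x$ (since $x \le n-1$ always), and $x\meet y = y\bdot(y\bdot x)$, which on this chain simplifies to $\min\{x,y\}$. From these, $x\join \neg x = \neg(\neg x \meet \neg\neg x)$, and since $\neg\neg x = n-1-(n-1-x) = x$, this becomes $\neg(\neg x \meet x) = \neg(\min\{n-1-x,\,x\})$.

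Next I would evaluate this expression. Writing $m = \min\{x, n-1-x\}$, we get $x\join \neg x = (n-1) - m = \max\{x,\,n-1-x\}$. The equation $x\join\neg x = n-1$ therefore holds precisely when $\max\{x,\,n-1-x\} = n-1$, i.e.\ when either $x = n-1$ or $n-1-x = n-1$; the latter forces $x=0$. Hence the only solutions are $x = 0$ and $x = n-1 = 1$, giving $E(\C_n) = \{0,1\}$ and $|E(\C_n)| = 2$. Dividing by $n$ yields $\emd(\C_n) = \frac{2}{n}$ as claimed.

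The main point requiring care is verifying the simplifications of the term operations on $\C_n$ rather than the final counting, which is immediate once the formula $x \join \neg x = \max\{x, n-1-x\}$ is established. In particular I would double-check that $x \meet y = \min\{x,y\}$ on $\C_n$: computing $y \bdot (y\bdot x) = y \bdot \max\{y-x,0\}$, when $y \ge x$ this is $y - (y-x) = x = \min\{x,y\}$, and when $y < x$ it is $y \bdot 0 = y = \min\{x,y\}$, confirming the identity. Everything else is bookkeeping with maxima and minima, so no genuine obstacle arises; the result is consistent with the general fact quoted earlier that $E(\A) = \A$ exactly when $\A$ is positive implicative, since the chains $\C_n$ for $n \ge 3$ are commutative but not positive implicative and so must have $|E(\C_n)| < n$.
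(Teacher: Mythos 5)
Your proposal is correct and follows essentially the same route as the paper: both reduce to showing $x\join\neg x=\max\{x,\,n-1-x\}$ on the chain $\C_n$ and observing this equals $n-1$ only for $x=0$ and $x=n-1$. The only difference is that you derive the identities $\neg x=n-1-x$, $x\meet y=\min\{x,y\}$, and hence $\join=\max$ explicitly from the definitions, whereas the paper asserts the $\max$ formula directly; your added verification is sound.
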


\begin{proof}
As observed above, we have $\{0, n-1\}\in E(\C_n)$. Take $k\in C_n\setminus \{0,n-1\}$. Then 
\[k\join \neg k=k\join (n-1-k)=\max\{k, n-k-1\}\neq n-1\] so $k\notin E(\C_n)$. Thus, $|E(\C_n)|=2$ and $\emd(\C_n)=\frac{2}{n}$.
\end{proof}

\begin{theorem}\label{EM has finite gap}
The equation $x\join\neg x=1$ has finite satisfiability gap $\frac{1}{3}$ in the language of bounded commutative BCK-algebras.
\end{theorem}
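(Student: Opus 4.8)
The plan is to apply the sufficient condition from Theorem~\ref{sufficient condition} directly to the equation (EM). First I would verify that the excluded middle equation $x\join\neg x=1$ is indeed an equation in the language of bounded commutative BCK-algebras, so that the machinery of Section~3 applies; the term operations $\neg$ and $\join$ are defined precisely in this setting, so this is immediate. The key input is then the computation of the sequence $d_n:=\emd(\C_n)$, which Lemma~\ref{emd C_n} already supplies: $d_n=\frac{2}{n}$ for all $n\geq 2$.

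The heart of the argument is to identify $\max \mathcal{D}_{\phi}$, where $\mathcal{D}_\phi=\{d_m\mid d_m<1\}$. Since $d_n=\frac{2}{n}$, we have $d_n=1$ precisely when $n=2$ (i.e.\ $\C_2=\2$, which is positive implicative), and $d_n<1$ for every $n\geq 3$. The sequence $\frac{2}{n}$ is strictly decreasing in $n$, so among those values with $d_n<1$ the maximum occurs at the \emph{smallest} admissible index, namely $n=3$, giving $d_{m_0}=d_3=\frac{2}{3}$. Thus $\mathcal{D}_\phi$ attains a maximum, and by Theorem~\ref{sufficient condition} the equation has finite satisfiability gap $\epsilon=1-d_{m_0}=1-\frac{2}{3}=\frac{1}{3}$.

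I would then close by unwinding what this gap statement means concretely, matching the claim in the surrounding text: every finite bounded commutative BCK-algebra $\A$ decomposes as a product of chains $\C_{j_1}\times\cdots\times\C_{j_k}$ by Theorem~\ref{RT80}, and either all factors are $\C_2$ (so $\A$ is positive implicative and $\emd(\A)=1$) or at least one factor $\C_{j_i}$ with $j_i\geq 3$ appears, forcing $\emd(\A)\leq\frac{2}{3}$ by multiplicativity.

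The only genuine subtlety—not a real obstacle but a point worth stating carefully—is confirming that the finiteness and maximum hypotheses of Theorem~\ref{sufficient condition} are met: one must check that $\mathcal{D}_\phi$ really does have a maximum rather than merely a supremum, which holds here because $\frac{2}{n}$ is decreasing and so the supremum $\frac{2}{3}$ is attained at $n=3$. I expect the proof to be short, essentially a one-line invocation of the two preceding results once the value $d_3=\frac{2}{3}$ is pinned down.
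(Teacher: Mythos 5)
Your proposal is correct and follows essentially the same route as the paper: invoke Lemma \ref{emd C_n} to get $d_n=\frac{2}{n}$, observe $d_2=1$ and that $\mathcal{D}_\phi$ attains its maximum $\frac{2}{3}$ at $n=3$, then apply Theorem \ref{sufficient condition}. The additional unwinding via Theorem \ref{RT80} is just the content of the sufficient-condition theorem restated, and your note that $\C_3$ realizes the value $\frac{2}{3}$ matches the paper's closing remark that the gap is sharp.
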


\begin{proof}
Recall the set $\mathcal{D}$ defined in Theorem \ref{sufficient condition}. From Lemma \ref{emd C_n} we have $d_n=\frac{2}{n}$. So $d_2=1$ and $\mathcal{D}$ has maximum $d_3=\frac{2}{3}$. Apply Theorem \ref{sufficient condition}.

The gap is realized by the algebra $\C_3$.
\end{proof}

\begin{prop}\label{x=1}
In the language of bounded commutative BCK-algebras, the equations $x=1$ and $\neg x=1$ both have finite satisfiability gap $\frac{1}{2}$.
\end{prop}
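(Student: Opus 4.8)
The plan is to show that each of the two equations has exactly one solution in any bounded commutative BCK-algebra of order $n$, so that its degree of satisfiability equals $\frac{1}{n}$. The dichotomy ``$\ds=1$ or $\ds\leq\frac{1}{2}$'' then drops out immediately, with the value $1$ occurring only for the trivial one-element algebra, and the bound $\frac{1}{2}$ is attained at $n=2$.

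The case $x=1$ is immediate: the solution set is the singleton $\{1\}$, so $\ds(x=1,\A)=\frac{1}{n}$. If $0=1$, i.e. $\A$ is the one-element algebra, this equals $1$; otherwise $n\geq 2$ and $\frac{1}{n}\leq\frac{1}{2}$. Hence every finite model satisfies $\ds=1$ or $\ds\leq\frac{1}{2}$, so the gap is $\frac{1}{2}$, realized by $\2$.

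For $\neg x=1$ the plan is to prove that its solution set is also a singleton, namely $\{0\}$. That $0$ is a solution is clear, since $\neg 0=1\bdot 0=1$. For the converse I would first establish the identity $\neg\neg x=x$ in every bounded commutative BCK-algebra. This is where commutativity does the work: using the definition $x\meet y=y\bdot(y\bdot x)$ together with $x\bdot 1=0$ and $x\bdot 0=x$, one computes $x\meet 1=1\bdot(1\bdot x)=\neg\neg x$ on one side and $1\meet x=x\bdot(x\bdot 1)=x\bdot 0=x$ on the other, so that commutativity of $\meet$ forces $\neg\neg x=x$. Granting this, if $\neg x=1$ then $\neg\neg x=\neg 1=1\bdot 1=0$ by (BCK3), whence $x=\neg\neg x=0$. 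Thus $\{\,x\mid \neg x=1\,\}=\{0\}$ and $\ds(\neg x=1,\A)=\frac{1}{n}$, and the argument concludes exactly as in the previous paragraph, again realized by $\2$.

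The main step --- indeed the only non-trivial one --- is the identity $\neg\neg x=x$; everything else is bookkeeping, and I expect no real obstacle. It is worth recording why commutativity cannot be dropped: in a non-commutative bounded BCK-algebra the identity can fail, for instance in Is\'eki's extension $\2\oplus\top$, whose bound is $\top$, where $\neg 1=\top\bdot 1=\top$ and $\neg\top=0$ give $\neg\neg 1=0\neq 1$; there both $0$ and $1$ satisfy $\neg x=\top$, so the equation acquires a second solution and the count $\frac{1}{n}$ is genuinely special to the commutative setting. Finally, since $\2$ forces $\ds=\frac{1}{2}$ for both equations while $\ds\neq 1$, the bound $\frac{1}{2}$ is sharp and the gap cannot be larger.
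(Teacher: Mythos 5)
Your proposal is correct and follows essentially the same route as the paper: count the (unique) solution of each equation and observe that $\frac{1}{|\A|}\leq\frac{1}{2}$, with $\2$ realizing the bound. The only difference is that you derive the identity $\neg\neg x=x$ for bounded commutative BCK-algebras directly from commutativity of $\meet$ (via $x\meet 1=\neg\neg x$ and $1\meet x=x$), whereas the paper simply cites this known fact; your inline derivation is valid and makes the argument self-contained.
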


\begin{proof}
We could prove this with an application of Theorem \ref{sufficient condition}, but we give here a simple argument. Every bounded commutative BCK-algebra $\A$ has at least two elements, so 
\[\ds(x=1, \A)=\frac{|\{y\in\A\,\mid\, y=1\}|}{|\A|}=\frac{1}{|\A|}\leq \frac{1}{2}\,.\]

Similarly, since double negation holds in bounded commutative BCK-algebras \cite{it78}, the equation $\neg x=1$ holds if and only if $x=0$, and we have $\ds(\neg x=1,\A)\leq\frac{1}{2}$.

Both gaps are realized by the algebra $\2$.
\end{proof}

\begin{rem}
We include Proposition \ref{x=1} in order to compare our situation with that of Heyting algebras discussed in \cite{BK22}. In that paper, the authors prove that, up to logical equivalence, the only formulas in one variable that have finite satisfiability gap in the language of Heyting algebras are $x=1$, $\neg x=1$, and $x\join\neg x=1$. To do this they use the characterization of the free Heyting algebra on one generator as the Rieger-Nishimura lattice. 

It is known that bounded commutative BCK-algebras are term-equivalent to MV-algebras \cite{mundici86}, and the free $n$-generated MV-algebra $\mc{MV}_n$ is (isomorphic to) the MV-algebra of McNaughton functions $[0,1]^n\to [0,1]$ \cite{CDM00}. However, even with this knowledge, it is not clear to the author how to characterize the formulas in one variable with finite satisfiability gap in the language of bounded commutative BCK-algebras. 

The situation is more dire in the language of BCK-algebras -- or even commutative BCK-algebras -- as the author is unaware of any characterization of the free algebras in these cases.
\end{rem}

\subsection{Double negation degree}\label{sec:dnd}

Let $\A$ be a bounded BCK-algebra of order $n$ and put \[D(\A)=\{\, x\in\A\,\mid\, \neg\neg x=x\,\}\,.\]  Define the \emph{double negation degree of $\A$}, denoted $\dnd(\A)$, to be the degree of satisfiability of the equation $\neg\neg x=x$; that is, 
\[\dnd(\A)=\ds(\text{DN},\A)=\frac{|D(\A)|}{n}\,.\] This is just the probability that a randomly chosen element is fixed under double negation.

 It is easy to check that $\neg\neg 0 =0$ and $\neg\neg 1=1$, so $|D(\A)|\geq 2$. If $\A$ is commutative then $D(\A)=\A$ \cite{it78}, and so $\dnd(\A)=1$. But if $\A$ is non-commutative then we have \[\frac{2}{n}\leq \dnd(\A)\leq \frac{n-1}{n}\,.\] We will show these bounds are sharp for $n\geq 3$. For example, one computes $\dnd(\PI)=\frac{2}{3}$.

\begin{theorem}\label{min dnd}
For each $n\geq 3$, there is a bounded non-commutative BCK-algebra $\B$ of order $n$ realizing the lower bound $\dnd(\B)=\frac{2}{n}$.
\end{theorem}

\begin{proof}
Let $\A$ be any BCK-algebra of order $n-1$ and consider Is\'{e}ki's extension $\A\oplus\top$. Notice that \[\neg\neg x=\top\bdot(\top\bdot x)=\begin{cases}0&\text{ if $x\in \A$}\\\top&\text{ if $x=\top$}\end{cases}\,,\] so only 0 and $\top$ are fixed by double negation. Hence, $\dnd(\A\oplus\top)=\frac{2}{n}$.
\end{proof}

To show that the upper bound for the double negation degree is also obtained, we need a new family of algebras. 
 
We define a one-element extension $\D_n$ of $\C_n$, with new top element $n$, as follows: 
\begin{align*}
n\bdot 0&=n\\
n\bdot k&=n-k-1 \tag{$k\in\{1,2,\ldots n-2\}$}\\
n\bdot (n-1)&=1\\
k\bdot n &=0 \tag{$k\in \C_n$}
\end{align*} For small orders, these $\D_n$'s are known to be BCK-algebras. For example, $\D_3$ is labelled as $B_{4-3-1}$ in \cite{mj94}, while $\D_4$ is labelled as $B_{5-4-9}$. Their Cayley tables are shown in Table \ref{tab:D}.
\begin{table}[h]
{\centering
\begin{tabular}{c||cccc}
$\bdot$  & 0              & 1 & 2 & 3\\\hline\hline
0             & 0              & 0 & 0 & 0\\
1             & 1              & 0 & 0 & 0\\
2             & 2              & 1 & 0 & 0 \\
3             & 3              & 1 & 1 & 0
\end{tabular}
\hspace{1cm}
\begin{tabular}{c||ccccc}
$\bdot$  & 0              & 1 & 2 & 3 & 4\\\hline\hline
0             & 0              & 0 & 0 & 0 & 0\\
1             & 1              & 0 & 0 & 0 & 0\\
2             & 2              & 1 & 0 & 0 & 0\\
3             & 3              & 2 & 1 & 0 & 0\\
4             & 4              & 2 & 1 & 1 & 0
\end{tabular}
\caption{\label{tab:D}the algebras $\D_3$ and $\D_4$}
}
\end{table}

The proof that $\D_n$ is a BCK-algebra in general is tedious and given in an appendix. The next theorem shows that this family of algebras achieves the upper bound for double negation degree.

\begin{theorem}\label{max dnd}
For each $n\geq 3$, there is a bounded non-commutative BCK-algebra $\A$ of order $n$ realizing the upper bound $\dnd(\A)=\frac{n-1}{n}$.
\end{theorem}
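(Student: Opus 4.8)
The plan is to realize the upper bound with an explicit member of the family $\D_n$ just introduced. Since $\D_n$ has order $n+1$, for a target order $n$ I would take the algebra $\D_{n-1}$, whose carrier is $\{0,1,\ldots,n-1\}$ with top element $n-1$. The case $n=3$ must be handled separately, because $\D_2$ coincides with the commutative algebra $\C_3=\TC$ and so has double negation degree $1$; for $n=3$ I would instead use $\PI$, which is bounded, non-commutative, and already satisfies $\dnd(\PI)=\frac{2}{3}=\frac{n-1}{n}$.

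For $n\geq 4$ the heart of the argument is a direct computation of $\neg x=(n-1)\bdot x$ and then of $\neg\neg x$ in $\D_{n-1}$, reading everything off the defining table (which I may assume defines a BCK-algebra, the verification being deferred to the appendix). From the definition one gets $\neg 0=n-1$, $\neg(n-1)=0$, $\neg(n-2)=1$, and on the middle block the map $x\mapsto \neg x=n-2-x$ is an involution of $\{1,\ldots,n-3\}$. Composing these, I expect $\neg\neg x=x$ for every element except the second-from-top element $n-2$: indeed $\neg\neg 0=\neg(n-1)=0$ and $\neg\neg(n-1)=\neg 0=n-1$ are fixed, the involution forces $\neg\neg x=x$ throughout the middle block, while $\neg\neg(n-2)=\neg 1=n-3\neq n-2$.

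Counting fixed points then gives $|D(\D_{n-1})|=n-1$ out of $n$, so $\dnd(\D_{n-1})=\frac{n-1}{n}$. Boundedness is immediate, since $x\bdot(n-1)=0$ for all $x$ makes $n-1$ a top element, and non-commutativity comes for free: the cited result of \cite{it78} says every commutative bounded BCK-algebra satisfies $\neg\neg x=x$ identically, so any algebra with $D(\A)\neq\A$ --- in particular $\D_{n-1}$ (and $\PI$ for $n=3$) --- is automatically non-commutative.

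The step requiring the most care is the bookkeeping at the boundaries of the index ranges rather than any single arithmetic line: one must check that the middle block $\{1,\ldots,n-3\}$ is nonempty and closed under $\neg$ (which is exactly what forces $n\geq 4$ and explains the separate treatment of $n=3$), and that the unique failure of double negation lands precisely at $x=n-2$ without spilling into the adjacent boundary cases. The genuinely laborious part of the larger development, namely verifying (BCK1)--(BCK5) for $\D_n$ in general, is independent of this theorem and is relegated to the appendix.
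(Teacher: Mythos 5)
Your proof is correct and follows the same strategy as the paper: realize the bound with the family $\D$. In fact your version is more careful than the paper's own proof, which exhibits $\D_n$ (of order $n+1$, concluding $\dnd(\D_n)=\frac{n}{n+1}$) as the witness for the order-$n$ claim and thus both carries an off-by-one in the indexing and leaves the case $n=3$ uncovered, since $\D_2=\TC$ is commutative; your reindexing to $\D_{n-1}$ and your separate treatment of $n=3$ via $\PI$ (whose degree $\frac{2}{3}$ the paper records just before the theorem) repair both points. The only difference of route is non-commutativity: the paper verifies it directly in the appendix by exhibiting the non-commuting pair $(n,n-1)$ in $\D_n$, whereas you deduce it from the failure of double negation together with the cited fact that bounded commutative BCK-algebras satisfy $\neg\neg x=x$; both are valid, and yours makes non-commutativity a free corollary of the fixed-point count you already carried out.
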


\begin{proof}
Consider the algebra $\D_n$ which is shown to be a non-commutative BCK-algebra in Lemma \ref{D_n is BCK}. For $k\in\{1, 2, \ldots, n-2\}$, we have $n\bdot k = n-k-1\in \{1, 2, \ldots, n-2\}$, so \[n\bdot(n\bdot k)=n\bdot (n-k-1)=n-(n-k-1)-1=k\,.\] Thus, $\{0, 1, 2, \ldots, n-2, n\}\subseteq D(\D_n)$.

However, $n\bdot\bigl(n\bdot (n-1)\bigr)=n\bdot 1=n-2\neq n-1$, so $n-1\notin D(\D_n)$. Thus, $\dnd(\D_n)=\frac{n}{n+1}$.

\end{proof}

\begin{cor}
The equation $\neg\neg x=x$ has no finite satisfiability gap in the language of bounded BCK-algebras.
\end{cor}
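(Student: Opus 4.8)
The plan is to show that the family $\{\D_n\}_{n\geq 3}$ constructed immediately above already forces the double negation degree arbitrarily close to $1$, so no uniform gap $\epsilon>0$ can exist. By Theorem \ref{max dnd}, each $\D_n$ is a bounded non-commutative BCK-algebra with $\dnd(\D_n)=\frac{n-1}{n}$ (reading the orders so that the degree genuinely tends to $1$). Since each $\D_n$ is non-commutative, it is \emph{not} a model on which double negation holds identically, so $\dnd(\D_n)<1$ for every $n$; yet $\dnd(\D_n)=\frac{n-1}{n}\to 1$ as $n\to\infty$.

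Concretely, I would argue by contradiction. Suppose $\neg\neg x = x$ had finite satisfiability gap $\epsilon>0$ in the language of bounded BCK-algebras. Then for every finite bounded BCK-algebra $\A$, either $\dnd(\A)=1$ or $\dnd(\A)\leq 1-\epsilon$. Choose $n$ large enough that $\frac{n-1}{n}>1-\epsilon$, i.e. $n>\frac{1}{\epsilon}$. The algebra $\D_n$ is a finite bounded non-commutative BCK-algebra, hence does not satisfy double negation identically, so $\dnd(\D_n)\neq 1$; the gap hypothesis then forces $\dnd(\D_n)\leq 1-\epsilon$. But $\dnd(\D_n)=\frac{n-1}{n}>1-\epsilon$, a contradiction. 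Therefore no such $\epsilon$ exists and the equation has no finite satisfiability gap.

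I do not expect a genuine obstacle here, since all the real work has already been done: the construction of the $\D_n$ and the computation of their double negation degree in Theorem \ref{max dnd}. The one point requiring care is bookkeeping — making sure the stated value of $\dnd(\D_n)$ and the index $n$ are aligned so that the degrees actually form a sequence approaching $1$ from below, and confirming that each $\D_n$ is indeed non-commutative (hence a legitimate witness with $\dnd<1$). Given that, the corollary is a one-line consequence: the values $\dnd(\D_n)$ are all strictly less than $1$ but have supremum $1$, so they cannot be bounded away from $1$ by any fixed positive $\epsilon$.
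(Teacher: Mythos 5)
Your argument is correct and is essentially the paper's own: the corollary is stated as an immediate consequence of Theorem \ref{max dnd}, precisely because the algebras $\D_n$ are non-commutative witnesses whose double negation degrees $\frac{n-1}{n}$ (equivalently $\frac{n}{n+1}$ with the paper's indexing of $\D_n$ as an algebra on $\{0,1,\ldots,n\}$) are strictly less than $1$ yet approach $1$, ruling out any uniform gap. Your explicit contradiction with $n>\frac{1}{\epsilon}$ and your note about aligning the index with the stated degree are just careful bookkeeping on top of the same idea.
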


By Theorems \ref{min dnd} and \ref{max dnd}, we can make the double negation degree as close to 0 as we like or as close to 1 as we like. For an algebra of order $n$, the possible double negation degrees are \[\Bigl\{\,\frac{2}{n}\,,\, \frac{3}{n}\,,\,\ldots\,,\, \frac{n-1}{n}\,,\, 1\,\Bigr\}\,.\] Empirically, the author has observed that for orders 3, 4, and 5, every possible double negation degree is obtained by some algebra. We conjecture this is true in general but do not yet have a proof.

\begin{con}
For each $n\geq 3$, every possible double negation degree is obtained by some bounded BCK-algebra. Consequently, every rational in $\bb{Q}\cap (0,1]$ is the double negation degree of some BCK-algebra.
\end{con}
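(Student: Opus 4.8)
The plan is to first recast the double negation degree in a form that makes the target purely combinatorial. Since $\neg\neg\neg x=\neg x$ holds in every bounded BCK-algebra, an element lies in $D(\A)$ exactly when it lies in the image of the unary term $\neg x = 1\bdot x$: if $x=\neg\neg x$ then $x\in\neg(\A)$, and conversely $\neg$ restricts to an involution on $\neg(\A)$, so every element of $\neg(\A)$ is fixed by $\neg\neg$. Hence $\dnd(\A)=|\neg(\A)|/n$, and the first assertion becomes the statement that for every $n\ge 3$ and every $k$ with $2\le k\le n$ there is a bounded BCK-algebra of order $n$ whose negation has image of size exactly $k$. The extreme values are already available: $k=n$ is realized by the commutative chain $\C_n$, the value $k=n-1$ by $\D_{n-1}$ (Theorem~\ref{max dnd}), and $k=2$ by any Is\'{e}ki extension (Theorem~\ref{min dnd}).

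Next I would dispatch the composite cases using multiplicativity. By Lemma~\ref{ds is mult} the double negation degree is multiplicative, and at the level of images $\neg(\A\times\B)=\neg(\A)\times\neg(\B)$, so a product of algebras of orders $n_1,n_2$ with image sizes $k_1,k_2$ realizes image size $k_1k_2$ at order $n_1n_2$. Thus whenever $n=n_1n_2$ and $k=k_1k_2$ with $2\le k_i\le n_i$ and each pair $(n_i,k_i)$ already realized, so is $(n,k)$. This reduces the problem to those pairs $(n,k)$ admitting no such nontrivial factorization, such as prime orders $n$; the first genuinely new case is $(n,k)=(5,3)$.

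The crux, and the step I expect to be the main obstacle, is a direct construction covering the remaining pairs $2<k<n-1$. The natural candidate is a family generalizing $\D_n$: build the algebra on a commutative chain that supplies the desired image (an involutive copy of $\C_k$, negated by $x\mapsto (k-1)\bdot x$) together with $n-k$ additional elements whose negations are redirected back into that chain, so that they are excluded from $\neg(\A)$ while the image retains size $k$. The single redirection $n-1\mapsto 1$ used in $\D_n$ would be replaced by $n-k$ such redirections. Specifying the full Cayley table so that (BCK1)--(BCK5) hold -- equivalently, extending a commutative chain by a prescribed order-reversing, non-injective negation -- is delicate; already the verification for the one-parameter family $\D_n$ is relegated to an appendix, and the general family involves several interacting rows. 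Once the algebra law is secured, non-commutativity (automatic as soon as $\neg$ fails to be injective) and the image-size computation are routine, so proving the BCK axioms for the whole family is where the argument must be made airtight, and is presumably why the statement remains open.

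Finally, the consequence for rationals follows formally from the first part. Given $r\in\bb{Q}\cap(0,1]$, if $r=1$ take any commutative algebra, e.g.\ $\C_2$. Otherwise write $r=p/q$ in lowest terms with $1\le p<q$. If $p\ge 2$ then $q\ge 3$ and $2\le p\le q$, so $r=p/q$ is realized at order $q$ by the image-size-$p$ algebra; if $p=1$ then $r=1/q=2/(2q)$ with $2q\ge 4$, realized at order $2q$ by an Is\'{e}ki extension (image size $2$). In every case $r=\dnd(\A)$ for some bounded BCK-algebra $\A$.
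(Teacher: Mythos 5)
This statement is labelled a conjecture in the paper, and the paper offers no proof of it; the question is therefore only whether your proposal settles the conjecture, and it does not. Your preliminary reductions are correct and genuinely useful: the identification $D(\A)=\neg(\A)$ (via $\neg\neg\neg x=\neg x$), the consequent reformulation as ``realize every image size $2\le k\le n$ at every order $n\ge 3$,'' the fact that $\neg(\A\times\B)=\neg(\A)\times\neg(\B)$ so that products handle composite pairs, and the three families already in the paper ($\C_n$ for $k=n$, $\D_{n-1}$ for $k=n-1$, Is\'{e}ki extensions for $k=2$) covering the extremes. The deduction of the second sentence of the conjecture from the first is also fine.

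However, the entire content of the conjecture lives in the cases you defer: pairs $(n,k)$ with $2<k<n-1$ admitting no useful factorization, the first being $(n,k)=(5,3)$. For these you offer only a heuristic --- adjoin $n-k$ elements to an involutive copy of $\C_k$ and ``redirect'' their negations into the chain --- with no Cayley table, no definition of the products among the new elements or between the new elements and the chain, and no verification of (BCK1)--(BCK5). As you yourself observe, even the one-redirection family $\D_n$ requires a lengthy case analysis in the appendix just to check (BCK1), and it is precisely this verification for a general family that constitutes the open problem. A proposal that explicitly flags its central step as ``where the argument must be made airtight'' and ``presumably why the statement remains open'' is a reduction of the conjecture to a concrete construction problem, which is worth recording, but it is not a proof.
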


\section{Equations in two variables}\label{sec:eqns 2 var}
 
\subsection{Commuting degree}\label{sec:cd}

Let $\A$ be a BCK-algebra of order $n$. Let \[C(\A)=\{\, (x,y)\in A^2\,\mid\, x\meet y= y\meet x\,\}\,.\] Define the \emph{commuting degree} of $\A$, denoted $\cd(\A)$, to be the degree of satisfiability of the commutativity equation $x\meet y=y\meet x$. That is, \[\cd(\A)=\frac{|C(\A)|}{n^2}\,.\] This is the probability that two elements chosen uniformly randomly (with replacement) commute with one another.

In any BCK-algebra $\A$, we have $x\meet 0=0\meet x$ for all $x\in \A$, and every element commutes with itself by (BCK3). So every pair of the form $(x,0)$, $(0,x)$, and $(x,x)$ is in $C(\A)$. Thus, $|C(\A)| \geq 3n-2$ and \[\cd(\A)\geq \frac{3n-2}{n^2}\,.\] Of course, if $\A$ is commutative then $\cd(\A)=1$, but if $\A$ is non-commutative we have
\[\frac{3n-2}{n^2}\leq \cd(\A)\leq \frac{n^2-2}{n^2}\,.\] We will show these bounds are sharp.

Suppose $\U=\bigsqcup_{\lambda\in\Lambda}\A_\lambda$ is a BCK-union of a family of BCK-algebras. If $a,b\in \U$ where $a\in \A_\lambda$ and $b\in \A_\mu$ with $\lambda\neq\mu$, then $a$ and $b$ necessarily commute:
\begin{align*}
a\meet b &= b\bdot (b\bdot a)=b\bdot b=0\\
b\meet a &= a\bdot (a\bdot b)=a\bdot a=0\,.
\end{align*} Thus, heuristically, we expect that the commuting degree is larger when an algebra has many incomparable elements, and smaller when an algebra has many comparable elements. Indeed, we will see this is the case, but we note that if all non-zero elements are incomparable, then the underlying poset admits a unique BCK-product which is necessarily commutative \cite{it76}, so the commuting degree is 1 for such an algebra.

\begin{prop}\label{cd of union}
Suppose $\A$ and $\B$ are BCK-algebras with $|\A|=n$, $|\B|=m$, $\cd(\A)=\frac{k}{n^2}$, and $\cd(\B)=\frac{\ell}{m^2}$. Then 
\[\cd(\A\sqcup\B)=\frac{k+\ell+2(n-1)(m-1)-1}{(n+m-1)^2}\,.\]
\end{prop}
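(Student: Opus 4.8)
The plan is to compute $|C(\U)|$ directly by a counting argument, partitioning the ordered pairs $(x,y)\in U^2$ according to where their coordinates lie. Writing $A^\ast = A\setminus\{0\}$ and $B^\ast = B\setminus\{0\}$, the carrier $U$ is the disjoint union of $A^\ast$, $B^\ast$, and $\{0\}$, so that $|U| = n+m-1$ and $|U|^2 = (n+m-1)^2$ will be the denominator. Every pair having a zero coordinate commutes, since $x\meet 0 = 0 = 0\meet x$; there are $2(n+m-1)-1$ such pairs. For a pair with both coordinates nonzero I would split into three subcases: both in $A^\ast$, both in $B^\ast$, or one in each.

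When both coordinates lie in $A^\ast$ (resp.\ $B^\ast$), the BCK-union operation restricts to the operation of $\A$ (resp.\ $\B$), hence so does the derived operation $\meet$, so such a pair commutes in $\U$ exactly when it commutes in $\A$ (resp.\ $\B$). Since $k=|C(\A)|$ already counts \emph{all} commuting pairs of $\A$, including the $2n-1$ pairs touching $0$, the number of commuting pairs with both coordinates in $A^\ast$ is $k-(2n-1)$; similarly $\ell-(2m-1)$ for $B^\ast$. For the mixed pairs, with $x\in A^\ast$ and $y\in B^\ast$ (or vice versa), the computation immediately preceding the statement shows $x\meet y = 0 = y\meet x$, so all $2(n-1)(m-1)$ of them commute.

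Summing the four contributions gives
\[
|C(\U)| = \bigl(2(n+m-1)-1\bigr) + \bigl(k-(2n-1)\bigr) + \bigl(\ell-(2m-1)\bigr) + 2(n-1)(m-1),
\]
and the linear terms collapse to $k+\ell+2(n-1)(m-1)-1$, whence dividing by $(n+m-1)^2$ yields the claimed formula. The only point requiring care -- and the one I expect to be the main bookkeeping obstacle -- is the treatment of the shared element $0$: because $\A$ and $\B$ overlap exactly in $0$, the counts $k$ and $\ell$ each already include the pairs involving $0$, and these must be subtracted off so that the pairs of $\U$ with a zero coordinate are counted once rather than multiple times. Getting this inclusion--exclusion around $0$ right is precisely what makes the additive constant come out to $-1$.
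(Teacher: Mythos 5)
Your proof is correct and follows essentially the same route as the paper's: both arguments rest on the facts that the union operation restricts to $\A$ and $\B$, that every mixed pair $(x,y)$ with $x\in A\setminus\{0\}$, $y\in B\setminus\{0\}$ commutes, and on an inclusion--exclusion around the shared element $0$. The only cosmetic difference is that you separate out all $2(n+m-1)-1$ zero-coordinate pairs before adding back $k-(2n-1)$ and $\ell-(2m-1)$, whereas the paper simply takes $k+\ell$ and subtracts $1$ for the doubly counted pair $(0,0)$; the arithmetic collapses to the same total $k+\ell+2(n-1)(m-1)-1$.
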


\begin{proof}
First note that $|\A\sqcup\B|=n+m-1$ since the BCK-union shares a 0 element, but the algebras are otherwise disjoint. 

Next, among the $k$ commuting pairs in $\A$ and the $\ell$ commuting pairs in $\B$, the pair $(0,0)$ is double counted, so $C(\A\sqcup\B)\geq k+\ell-1$. But by our observations above, all elements of $\A$ commute with all elements of $\B$. Thus, for all $x\in \A\setminus\{0\}$ and all $y\in\B\setminus\{0\}$, the pairs $(x,y)$ and $(y,x)$ are in $C(\A\sqcup\B)$. There are $2(n-1)(m-1)$ such pairs.

Hence, \[\cd(\A\sqcup\B)=\frac{k+\ell+2(n-1)(m-1)-1}{(n+m-1)^2}\,.\]
\end{proof}

\begin{cor}\label{union with 2}
Let $\A$ be a BCK-algebra with $|\A|=n$ and $\cd(\A)=\frac{k}{n^2}$. Then 
\[\cd(\A\sqcup\2)=\frac{k+2n+1}{(n+1)^2}\,.\]
\end{cor}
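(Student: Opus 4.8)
The plan is to obtain Corollary~\ref{union with 2} as a direct specialization of Proposition~\ref{cd of union}. The key observation is that the algebra $\2$ is a BCK-algebra of order $2$ whose commuting degree is $1$ (it is implicative, hence commutative). Writing $\cd(\2) = \frac{\ell}{m^2}$ in the notation of the proposition, I would set $m = |\2| = 2$ and $\ell = 2^2 \cdot \cd(\2) = 4$, since every one of the $4$ pairs in $\2^2$ commutes.

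First I would substitute these values into the formula from Proposition~\ref{cd of union}. With $|\A| = n$, $\cd(\A) = \frac{k}{n^2}$, $m = 2$, and $\ell = 4$, the denominator becomes $(n + m - 1)^2 = (n+1)^2$, and the numerator becomes
\[
k + \ell + 2(n-1)(m-1) - 1 = k + 4 + 2(n-1) - 1 = k + 2n + 1\,.
\]
This yields exactly
\[
\cd(\A \sqcup \2) = \frac{k + 2n + 1}{(n+1)^2}\,,
\]
as claimed.

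Since this is a purely mechanical substitution, there is no genuine obstacle; the only point requiring a moment's care is confirming that $\ell = 4$, i.e.\ that all four ordered pairs from $\2$ commute. This follows because $\2$ is the unique BCK-algebra of order $2$ and is implicative (hence commutative), so $C(\2) = \2^2$ and $\ell = 4$. I would state this verification explicitly and then perform the arithmetic simplification to present the result cleanly.
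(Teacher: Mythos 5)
Your proposal is correct and is essentially identical to the paper's proof, which likewise applies Proposition~\ref{cd of union} with $m=2$ and $\ell=4$ after noting that $\2$ is commutative; your arithmetic check of the numerator is accurate.
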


\begin{proof}
The algebra $\2$ is commutative, so apply Proposition \ref{cd of union} with $\ell=4$ and $m=2$.
\end{proof}

\begin{theorem}\label{max cd}
For each $n\geq 3$, there is a non-commutative BCK-algebra of order $n$ realizing the maximum commuting degree $\frac{n^2-2}{n^2}$.
\end{theorem}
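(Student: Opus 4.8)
The plan is to induct on $n$, using $\PI$ as the base case and repeatedly taking a BCK-union with $\2$ to climb from order $n$ to order $n+1$. The whole construction is powered by Corollary \ref{union with 2}, so the work reduces to a single base-case computation plus an arithmetic identity.

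For the base case $n=3$, I would check directly that $\PI$ attains the bound. Since $\PI$ is the chain $0<1<2$, every pair involving $0$ and every diagonal pair $(x,x)$ commutes, which accounts for $7$ of the $9$ pairs. The only remaining pairs are $(1,2)$ and $(2,1)$, and a short computation gives $1\meet 2 = 2\bdot(2\bdot 1)=2\bdot 2=0$ while $2\meet 1 = 1\bdot(1\bdot 2)=1\bdot 0=1$, so both fail to commute. Hence $|C(\PI)|=7=3^2-2$, and $\PI$ is non-commutative with $\cd(\PI)=\frac{7}{9}=\frac{n^2-2}{n^2}$.

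For the inductive step, suppose $\A$ is a non-commutative BCK-algebra of order $n$ with $\cd(\A)=\frac{n^2-2}{n^2}$, so that $k=n^2-2$ in the notation of Corollary \ref{union with 2}. Then $\A\sqcup\2$ has order $n+1$, and by that corollary
\[\cd(\A\sqcup\2)=\frac{(n^2-2)+2n+1}{(n+1)^2}=\frac{n^2+2n-1}{(n+1)^2}=\frac{(n+1)^2-2}{(n+1)^2}\,,\]
which is exactly the claimed maximum for order $n+1$. I would then note that $\A\sqcup\2$ is still non-commutative: the BCK-union leaves the operation on the $\A$-block unchanged, so the meet of any two elements of $\A$ is computed just as in $\A$, and the non-commuting pair of $\A$ survives in the union.

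The argument is essentially a telescoping identity, so there is no serious obstacle; the only real content is the base-case verification together with the observation that $k+2n+1=(n+1)^2-2$ precisely when $k=n^2-2$. (Equivalently, one could skip the induction and directly exhibit $\A=\PI\sqcup\2\sqcup\cdots\sqcup\2$ with $n-3$ copies of $\2$, which has exactly the two non-commuting pairs inherited from $\PI$.)
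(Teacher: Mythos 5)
Your proposal is correct and follows the paper's proof essentially verbatim: the paper also takes $\B_3=\PI$ (with the same count of $7$ commuting pairs) and sets $\B_n=\B_{n-1}\sqcup\2$, applying Corollary \ref{union with 2} for the same telescoping arithmetic. Your explicit remark that the non-commuting pair of $\A$ survives in $\A\sqcup\2$ is a small point the paper leaves implicit, but otherwise the two arguments are identical.
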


\begin{proof} For $n=3$, recall the algebra $\PI$ defined in Table \ref{tab:algs}. The only non-commuting pairs are $(1,2)$ and $(2,1)$, so $\PI$ has commuting degree $\frac{7}{9}$.

Define a family of algebras $\B_n$ by 
\begin{align*}
\B_3&=\PI\\ 
\B_n&=\B_{n-1}\sqcup\2 \text{ for $n>3$\,.}
\end{align*}

For $n>3$, assume that $\cd(\B_{n-1})=\frac{(n-1)^2-2}{(n-1)^2}=\frac{n^2-2n-1}{(n-1)^2}$. By Corollary \ref{union with 2} we have
\[\cd(\B_n)=\cd(\B_{n-1}\sqcup\2)=\frac{(n^2-2n-1)+\bigl(2(n-1)+1\bigr)}{n^2}=\frac{n^2-2}{n^2}\,,\] and the result follows by induction.
\end{proof}

If we label the atoms of $\B_n$ by $\{a_i\}_{i=1}^{n-2}$, then Figure \ref{fig:B_n} shows the Hasse diagram, verifying our intuition that commuting degree is large when the algebra has many incomparable elements.
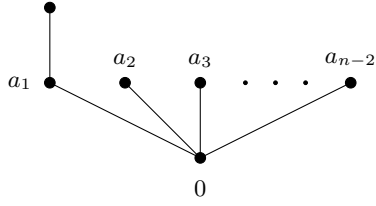
\begin{figure}[h]
\centering
\begin{tikzpicture}
\filldraw (0,0) circle (2pt);
\filldraw (-2,1) circle (2pt);
\filldraw (-2,2) circle (2pt);
\filldraw (-1,1) circle (2pt);
\filldraw (0,1) circle (2pt);
\filldraw (2,1) circle (2pt);
\draw [-] (0,0) -- (-2,1) -- (-2,2);
\draw [-] (0,0) -- (-1,1);
\draw [-] (0,0) -- (0,1);
\draw [-] (0,0) -- (2,1);
	\node at (0,-.4) {\small 0};
	\node at (-2.4, 1) {\small $a_1$};
	\node at (-1, 1.3) {\small $a_2$};
	\node at (0, 1.3) {\small $a_3$};
\filldraw (.6,1) circle (.7pt);
\filldraw (1,1) circle (.7pt);
\filldraw (1.4,1) circle (.7pt);
	\node at (2, 1.3) {\small $a_{n-2}$};
\end{tikzpicture}
\caption{Hasse diagram for $\B_n$}
\label{fig:B_n}
\end{figure}

\begin{cor}
In the language of BCK-algebras, the equation $x\meet y = y\meet x$ has no finite satisfiability gap.
\end{cor}

Next we consider the lower bound for commuting degree.

\begin{prop}\label{cd of sum}
Suppose $\A$ is a BCK-algebra with $|\A|=n$ and $\cd(\A)=\frac{k}{n^2}$. Then 
\[\cd(\A\oplus\top)=\frac{k+3}{(n+1)^2}\,.\]
\end{prop}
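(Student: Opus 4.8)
The plan is to count the commuting pairs in $\A\oplus\top$ directly. Since Is\'{e}ki's extension adjoins exactly one new element, $|\A\oplus\top|=n+1$ and the denominator is $(n+1)^2$; so it suffices to show that $|C(\A\oplus\top)|=k+3$. I would organize the count according to whether the pair lies entirely in $\A$ or involves the new top element $\top$.

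First I would handle pairs from $\A^2$. For $x,y\in\A$, both $y\bdot x$ and $y\bdot(y\bdot x)$ are computed using only products of elements of $\A$, and the extension leaves those products unchanged (it only redefines products involving $\top$). Hence $x\meet y$ in $\A\oplus\top$ agrees with $x\meet y$ in $\A$, so a pair in $\A^2$ commutes in $\A\oplus\top$ if and only if it commutes in $\A$. This contributes exactly the $k$ commuting pairs of $\A$.

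Next I would treat the pairs involving $\top$, using the defining relations $\top\bdot x=\top$, $x\bdot\top=0$, and $\top\bdot\top=0$, together with the general identity $z\bdot 0=z$. For $x\in\A$ one computes
\[
x\meet\top=\top\bdot(\top\bdot x)=\top\bdot\top=0,\qquad
\top\meet x=x\bdot(x\bdot\top)=x\bdot 0=x,
\]
so the pair $(x,\top)$ (equivalently $(\top,x)$) commutes if and only if $x=0$, yielding precisely $(0,\top)$ and $(\top,0)$. Finally $\top\meet\top=\top\bdot(\top\bdot\top)=\top\bdot 0=\top$, so $(\top,\top)$ commutes. Summing gives $k+2+1=k+3$ commuting pairs, and the formula follows.

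I do not expect a serious obstacle here; the computation is short once the meet is evaluated on the defining table. The only point requiring care is the asymmetry $x\meet\top=0\neq x=\top\meet x$ for $x\neq 0$, which is exactly what forces $\top$ to commute with nothing in $\A$ except $0$. The claim that the meet restricted to $\A^2$ is unchanged rests on $\A$ being a subalgebra of its Is\'{e}ki extension, which is immediate from the construction.
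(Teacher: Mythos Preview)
Your proof is correct and follows essentially the same approach as the paper: you compute $x\meet\top=0$ and $\top\meet x=x$ to see that $\top$ commutes only with $0$ and itself, and use that $\A$ sits as a subalgebra to retain the $k$ commuting pairs from $\A$. The paper's version is slightly terser (it leaves the subalgebra observation implicit), but the argument is the same.
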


\begin{proof}
We must have $(0,\top)$, $(\top, 0)$, and $(\top,\top)$ in $C(\A\oplus\1)$, but note that $\top$ does not commute with any non-zero element of $\A$:
\begin{align*}
x\meet\top &=\top\bdot(\top\bdot x)=\top\bdot\top=0\,,\\
\top\meet x&= x\bdot(x\bdot\top)=x\bdot 0=x
\end{align*} for any non-zero $x\in\A$. Thus, $|C(\A\oplus\top)|=k+3$ and $\cd(\A\oplus\top)=\frac{k+3}{(n+1)^2}$.
\end{proof}

\begin{theorem}\label{min cd}
For each $n\geq 3$, there is a BCK-algebra of order $n$ realizing the minimum commuting degree $\frac{3n-2}{n^2}$.
\end{theorem}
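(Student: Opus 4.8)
The plan is to build the extremal family by iterating Is\'eki's extension, exploiting the fact that this construction raises the commuting count by exactly $3$ as it moves from order $n$ to order $n+1$ --- precisely the increment needed to stay on the minimal curve $\frac{3n-2}{n^2}$. Indeed, from Proposition \ref{cd of sum}, if $\cd(\A)=\frac{k}{n^2}$ then $\cd(\A\oplus\top)=\frac{k+3}{(n+1)^2}$, and substituting the minimal value $k=3n-2$ yields $\frac{3n+1}{(n+1)^2}=\frac{3(n+1)-2}{(n+1)^2}$, which is again the minimum for the next order. So the minimum is invariant under Is\'eki's extension, and an induction should close the argument.

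First I would establish the base case. For $n=3$ the lower bound $\frac{3n-2}{n^2}$ and the upper bound $\frac{n^2-2}{n^2}$ from Theorem \ref{max cd} both equal $\frac{7}{9}$, so every non-commutative BCK-algebra of order $3$ is simultaneously extremal; concretely $\PI$ has commuting degree $\frac{7}{9}=\frac{3\cdot 3-2}{3^2}$. Next I would set up the induction: define $\M_3=\PI$ and $\M_n=\M_{n-1}\oplus\top$ for $n>3$. Assuming $\cd(\M_{n-1})=\frac{3(n-1)-2}{(n-1)^2}$, so that the commuting count is $k=3n-5$, Proposition \ref{cd of sum} gives
\[\cd(\M_n)=\cd(\M_{n-1}\oplus\top)=\frac{k+3}{n^2}=\frac{(3n-5)+3}{n^2}=\frac{3n-2}{n^2}\,,\]
which is exactly the claimed lower bound for order $n$. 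The result then follows by induction.

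There is essentially no hard step here once Proposition \ref{cd of sum} is in hand; the content is the structural observation underlying that proposition, namely that the new top element $\top$ commutes with $0$ and with itself but with no other element, so the extension contributes only the three forced pairs $(0,\top)$, $(\top,0)$, $(\top,\top)$ and creates no ``accidental'' commuting pairs. The only point requiring genuine care is verifying that the base case sits on the minimal curve rather than merely near it, which holds because at $n=3$ the lower and upper bounds coincide at $\frac{7}{9}$. (One could alternatively begin the induction one step earlier from $\2$, since $\cd(\2)=1=\frac{3\cdot 2-2}{2^2}$ realizes the minimum trivially for order $2$ and $\PI\cong\2\oplus\top$; either starting point gives the same family.)
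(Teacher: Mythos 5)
Your proof is correct and is essentially the paper's own argument: the paper likewise sets $\M_3=\PI$, $\M_n=\M_{n-1}\oplus\top$, and applies Proposition \ref{cd of sum} inductively to get $\cd(\M_n)=\frac{3n-2}{n^2}$. Your additional observations (the base case coinciding with the upper bound at $n=3$, and the option of starting from $\2$ since $\PI\cong\2\oplus\top$) are accurate but not needed beyond what the paper does.
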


\begin{proof} For $n=3$, the algebra $\PI$ has commuting degree $\frac{7}{9}$, as mentioned in the proof of Theorem \ref{max cd}.

Define a new family of BCK-algebras as follows:
\begin{align*}
\M_3 &= \PI \text{ and}\\ 
\M_n&=\M_{n-1}\oplus\top\text{ for $n>3$\,.}
\end{align*} 

Assume $\cd(\M_{n-1})=\frac{3(n-1)-2}{(n-1)^2}$ for some $n>3$. By Proposition \ref{cd of sum}, we have
\[\cd(\M_n)=\cd(\M_{n-1}\oplus\top)=\frac{\bigl(3(n-1)-2\bigr)+3}{n^2}=\frac{3n-2}{n^2}\,,\] and the result follows by induction.
\end{proof}

Since the $\M_n$'s are linear, the above result verifies our intuition that commuting degree is small when there are many comparable elements. 

To borrow terminology from \cite{BK22}, one might say that the $\B_n$'s are ``deceptively'' non-commutative: they are as close as possible to being commutative without actually being commutative. On the other hand, the $\M_n$'s are transparently non-commutative, being as far away as possible from commutativity.

For a BCK-algebra $\A$ of order $n$, the possible commuting degrees are \[\Bigl\{\,\frac{3n-2}{n^2}\,,\, \frac{3n}{n^2}\,,\, \frac{3n+2}{n^2}\,,\, \ldots\,,\, \frac{n^2-2}{n^2}\,,\, 1\,\Bigr\}\,.\] Empirically, the author has observed that for orders 3, 4, and 5, every possible commuting degree is obtained by some algebra. We conjecture this is true in general but do not yet have a proof.

\begin{con}
For each $n\geq 3$, every possible commuting degree is obtained by some BCK-algebra.
\end{con}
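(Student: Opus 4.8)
The plan is to recast the conjecture in terms of commuting-pair counts and prove it by induction on $n$. For an algebra $\A$ of order $n$ write $c(\A)=|C(\A)|$, so that $\cd(\A)=c(\A)/n^2$. The discussion preceding the conjecture shows that the realizable values of $c(\A)$ are exactly the integers in $\{3n-2,\,3n,\,3n+2,\,\ldots,\,n^2-2,\,n^2\}$: they are at least $3n-2$, at most $n^2$, and congruent to $n^2$ modulo $2$, since every non-commuting pair $(a,b)$ with $a\neq b$ is accompanied by its transpose $(b,a)$. Thus it suffices to realize each such integer as the commuting-pair count of some BCK-algebra of order $n$.

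For the base case $n=3$ the target set is $\{7,9\}$, realized by $\PI$ (count $7$) and the commutative algebra $\C_3$ (count $9$). For the inductive step I would assume that every count in $\{3(n-1)-2,\,\ldots,\,(n-1)^2\}$ is realized at order $n-1$, and then push these algebras forward through the two order-raising constructions whose effect on the count is already computed. By Proposition \ref{cd of sum}, the extension $\A\oplus\top$ sends an order-$(n-1)$ algebra of count $k$ to an order-$n$ algebra of count $k+3$; by Corollary \ref{union with 2}, the union $\A\sqcup\2$ sends count $k$ to count $k+(2n-1)$. Both constructions yield a genuine BCK-algebra from an arbitrary input. Applying $\oplus\top$ across the full inductive range yields every count in $\{3n-2,\,\ldots,\,(n-1)^2+3\}=\{3n-2,\,\ldots,\,n^2-2n+4\}$, and applying $\sqcup\2$ yields every count in $\{5n-6,\,\ldots,\,n^2\}$.

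It then remains to verify that these two step-$2$ blocks cover the whole target set. They automatically lie in the correct parity class: from $k\equiv(n-1)^2\pmod 2$ one gets $k+3\equiv k+(2n-1)\equiv n^2\pmod 2$. The lower block begins at the global minimum $3n-2$ and the upper block ends at the global maximum $n^2$, so the only thing to check is the absence of a gap between them, namely $5n-6\leq(n^2-2n+4)+2$. This rearranges to $(n-3)(n-4)\geq 0$, which holds for every integer $n\geq 4$, with the two blocks merely adjacent at $n=4$ and genuinely overlapping for $n\geq 5$. This closes the induction.

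I expect the only real subtlety to be this interlocking of ranges rather than any single computation: the two constructions have increments of very different sizes ($3$ versus $2n-1$), and one must confirm that their images tile the target interval without leaving an even-numbered count uncovered. The tightest case is $n=4$, where the blocks $\{10,12\}$ and $\{14,16\}$ are exactly adjacent, and the parity congruence above is what guarantees there is no off-by-two mismatch there. No algebras beyond $\oplus\top$ and $\sqcup\2$ applied to the order-$(n-1)$ examples are needed, so the induction is self-contained once the base case and the inequality $(n-3)(n-4)\geq 0$ are in hand.
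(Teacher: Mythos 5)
The paper does not actually prove this statement: it is recorded as a conjecture, with the author explicitly noting that only empirical verification at orders $3$, $4$, and $5$ is available. Your argument therefore settles the conjecture rather than reproving it, and as far as I can check it is correct. The candidate set of counts is indeed the arithmetic progression $3n-2, 3n, \ldots, n^2$: the paper supplies the two endpoints, and your parity observation (non-commuting pairs come in transpose pairs $\{(a,b),(b,a)\}$ with $a\neq b$, so $|C(\A)|\equiv n^2 \pmod 2$) is exactly the missing justification for the step size of $2$. The two increments are correctly read off from Proposition \ref{cd of sum} ($k\mapsto k+3$) and Corollary \ref{union with 2} ($k\mapsto k+2n-1$ when the input has order $n-1$); both results apply to arbitrary finite BCK-algebras and both constructions always return a BCK-algebra, so the inductive step is legitimate. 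The interval arithmetic also checks out: the two image blocks $\{3n-2,\ldots,n^2-2n+4\}$ and $\{5n-6,\ldots,n^2\}$ lie in the right parity class, start at the global minimum and end at the global maximum respectively, and $(n-3)(n-4)\geq 0$ guarantees they meet or overlap for every $n\geq 4$, the tight case $n=4$ giving $\{10,12\}\cup\{14,16\}=\{10,12,14,16\}$ as required. Conceptually you have interpolated between the paper's two extremal families $\M_n$ and $\B_n$, which iterate $\oplus\top$ and $\sqcup\2$ separately starting from $\PI$; mixing the two constructions at each stage, and seeding the induction with $\C_3$ as well as $\PI$, fills in every intermediate value. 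One phrasing nit: before the induction is run you only know the realizable counts are \emph{contained} in the candidate set, so the word ``exactly'' in your opening paragraph is premature, but this does not affect the argument. This would upgrade the conjecture to a theorem and is worth writing up carefully.
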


\subsection{Positive implicative degree}\label{sec:pid}

Consider a BCK-algebra $\A$ of order $n$ and recall the equation (E$_1$). Let \[P(\A)=\{\, (x,y)\in\A^2\,\mid\, (x\bdot y)\bdot y=x\bdot y\,\}\,,\] and define the \emph{positive implicative degree of $\A$}, denoted $\pid(\A)$, to be the degree of satisfiability of the equation $(x\bdot y)\bdot y=x\bdot y$; that is, 
\[\pid(\A)=\frac{|P(\A)|}{n^2}\,.\] One checks that all pairs of the form $(0,x)$, $(x,0)$, and $(x,x)$ are in $P(\A)$ for all $x\in \A$, so $|P(\A)|\geq 3n-2$. But also, any pair $(x,y)$ such that $x\bdot y=0$ is in $P(\A)$. That is, if $x\leq y$, then $(x,y)\in P(\A)$. This makes $P(\A)$ generally harder to count than its counterpart $C(\A)$.

The simplest possible case is that all non-zero elements of $\A$ are incomparable. However, such a poset admits a unique BCK-product and the corresponding algebra will always be positive implicative (in fact, implicative)\cite{it76}, so $\pid(\A)=1$ for such an algebra.

Thus, in order for $\A$ to fail (E$_1$), there must be at least one pair $(x,y)$ of non-zero elements that are comparable. We will show that, among algebras that fail (E$_1$), the maximum positive implicative degree is achieved by an algebra having exactly one pair of non-zero elements $(x,y)$ that are comparable, though we note this is not the only method to achieve the maximum.

\begin{prop}\label{pid of union}
Suppose $\A$ and $\B$ are BCK-algebras with $|\A|=n$, $|\B|=m$, $\pid(\A)=\frac{k}{n^2}$, and $\pid(\B)=\frac{\ell}{m^2}$. Then 
\[\pid(\A\sqcup\B)=\frac{k+\ell+2(n-1)(m-1)-1}{(n+m-1)^2}\,.\] Consequently, 
\[\pid(\A\sqcup\2)=\frac{k+2n+1}{(n+1)^2}\,.\]
\end{prop}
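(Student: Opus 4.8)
The plan is to count the pairs in $P(\A\sqcup\B)$ directly, by partitioning $(\A\sqcup\B)^2$ according to which summand each coordinate lives in, exactly mirroring the structure of the proof of Proposition~\ref{cd of union}. First I would record that $|\A\sqcup\B|=n+m-1$, since the two carriers share only the common zero. Then I would break the count of $P(\A\sqcup\B)$ into the ``same-summand'' pairs and the ``cross'' pairs.

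For the same-summand pairs, the key observation is that the operation $\bdot$ on $\A\sqcup\B$ restricts to the original operation $\bdot_\lambda$ on each summand, so for $x,y$ both in $\A$ the equation $(x\bdot y)\bdot y=x\bdot y$ holds in $\U$ exactly when it holds in $\A$, and likewise for $\B$. Hence the pairs from within $\A$ contribute $k$ and those from within $\B$ contribute $\ell$, but the pair $(0,0)$ is counted in both, so together these give $k+\ell-1$. For the cross pairs, I would use the defining clause of the BCK-union: if $x\in\A\setminus\{0\}$ and $y\in\B\setminus\{0\}$ (or vice versa) then $x\bdot y=x$, so $(x\bdot y)\bdot y=x\bdot y=x\bdot y$ reduces to checking $x\bdot y=(x\bdot y)\bdot y$, i.e. $x=x\bdot y$; since $x\bdot y=x$ and then $(x\bdot y)\bdot y = x\bdot y = x$, the equation holds automatically. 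Thus every one of the $2(n-1)(m-1)$ cross pairs lies in $P(\U)$. Adding these contributions yields $|P(\A\sqcup\B)|=k+\ell+2(n-1)(m-1)-1$, and dividing by $(n+m-1)^2$ gives the stated formula. The consequence for $\A\sqcup\2$ then follows by substituting $m=2$ and $\ell=4$ (since $\2$ is implicative, hence positive implicative, so $\pid(\2)=1=\tfrac{4}{4}$), which collapses $2(n-1)(m-1)-1$ to $2n-3$ and $k+\ell$ to $k+4$, giving numerator $k+2n+1$ and denominator $(n+1)^2$.

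The main obstacle I anticipate is verifying the cross-pair claim cleanly: one must be careful that for a cross pair the BCK-union operation genuinely forces $x\bdot y=x$ in \emph{both} applications of $\bdot$ in the expression $(x\bdot y)\bdot y$. The subtlety is that after computing $x\bdot y=x\in\A$, the second product $x\bdot y$ (with the same $y\in\B$) is again a cross product and so equals $x$ as well; thus $(x\bdot y)\bdot y=x=x\bdot y$, and the equation holds. I would state this explicitly for both orientations $(x,y)$ and $(y,x)$ rather than appealing to symmetry, since $\bdot$ is not commutative. Everything else is bookkeeping identical to Proposition~\ref{cd of union}, so the proof is short once this verification is in place.
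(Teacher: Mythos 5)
Your proposal is correct and follows essentially the same route as the paper: the same decomposition into within-summand pairs (contributing $k+\ell-1$ after removing the double-counted $(0,0)$) and cross pairs, with the same verification that the BCK-union operation forces $(x\bdot y)\bdot y=x\bdot y$ for every cross pair, and the same specialization $m=2$, $\ell=4$ for $\A\sqcup\2$. The paper's write-up is just terser, deferring the bookkeeping to its Proposition on commuting degree of unions.
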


\begin{proof} The proof is similar to that of Proposition \ref{cd of union}. By the definition of $\bdot$ on $\A\sqcup\B$, if $x\in\A\setminus\{0\}$ and $y\in \B\setminus\{0\}$, we have
\begin{align*}
(x\bdot y)\bdot y&=x\bdot y\\
(y\bdot x)\bdot x&=y\bdot x
\end{align*} and so $(x,y)$ and $(y,x)$ are in $P(\A\sqcup\B)$ for all such pairs.

Thus, \[\pid(\A\sqcup\B)=\frac{k+\ell+2(n-1)(m-1)-1}{(n+m-1)^2}\,.\]

Finally, we note $\2$ is positive implicative, so $\pid(\2)=1$.
\end{proof}

For $n\geq 3$, define a family of algebras as follows:
\begin{align*}
\P_3&=\TC\\ 
\P_n&=\P_{n-1}\sqcup\2\text{ for $n>3$\,.}
\end{align*} If we let $\{a_i\}_{i=1}^{n-2}$ denote the atoms of $\P_n$, the Hasse diagram for $\P_n$ was already shown in Figure \ref{fig:B_n}. Notice there is exactly one pair $(x,y)$ of non-zero elements with $x<y$.

\begin{theorem}\label{max pid}
For each $n\geq 3$, there is a BCK-algebra $\A$ of order $n$ realizing the upper bound $\pid(\A)=\frac{n^2-1}{n^2}$. Thus, in the language of BCK-algebras, the equation $(x\bdot y)\bdot y=x\bdot y$ has no finite satisfiability gap.
\end{theorem}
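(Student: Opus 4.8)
The plan is to argue by induction on $n$, mirroring exactly the argument for Theorem \ref{max cd} but using the family $\P_n$ together with the second formula of Proposition \ref{pid of union}, namely $\pid(\A\sqcup\2)=\frac{k+2n+1}{(n+1)^2}$ when $\pid(\A)=\frac{k}{n^2}$ and $|\A|=n$.

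For the base case $n=3$ I would compute $\pid(\P_3)=\pid(\TC)$ directly. Checking all nine pairs $(x,y)$ against $(x\bdot y)\bdot y=x\bdot y$, every pair satisfies the equation except $(2,1)$, since $(2\bdot 1)\bdot 1=1\bdot 1=0$ while $2\bdot 1=1$. Hence $|P(\TC)|=8$ and $\pid(\P_3)=\frac{8}{9}=\frac{3^2-1}{3^2}$, as required.

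For the inductive step, suppose $\pid(\P_{n-1})=\frac{(n-1)^2-1}{(n-1)^2}$, so that the numerator is $k=(n-1)^2-1=n^2-2n$. Since $\P_{n-1}$ has order $n-1$, applying Proposition \ref{pid of union} with $n-1$ in place of $n$ yields
\[\pid(\P_n)=\pid(\P_{n-1}\sqcup\2)=\frac{(n^2-2n)+2(n-1)+1}{n^2}=\frac{n^2-1}{n^2}\,,\]
which closes the induction. (The one point to watch is this index shift: the $n$ appearing in the proposition is the order of the left factor, here $n-1$, not the order of $\P_n$.)

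Finally I would record the no-gap conclusion. Each $\P_n$ fails (E$_1$): the pair $(2,1)$ inherited from the $\TC$ summand continues to violate the equation in the BCK-union, since the operation on that summand is unchanged, so $\pid(\P_n)=\frac{n^2-1}{n^2}<1$. Yet $\frac{n^2-1}{n^2}\to 1$ as $n\to\infty$, so no constant $\epsilon>0$ can satisfy the definition of a finite satisfiability gap. I do not anticipate any genuine obstacle here; the argument is a routine induction structurally identical to the commuting-degree case already treated, the only care being the bookkeeping in the index shift noted above.
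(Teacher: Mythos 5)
Your proposal is correct and follows essentially the same route as the paper: the base case $\pid(\TC)=\frac{8}{9}$ followed by induction via Proposition \ref{pid of union} applied to $\P_n=\P_{n-1}\sqcup\2$, with the same index bookkeeping. The only addition is your explicit verification that the pair $(2,1)$ continues to witness failure of (E$_1$) in each $\P_n$, which the paper leaves implicit.
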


\begin{proof} For $n=3$, one computes $\pid(\TC)=\frac{8}{9}$.

For $n>3$, assume $\pid(\P_{n-1})=\frac{(n-1)^2-1}{(n-1)^2}=\frac{n^2-2n}{(n-1)^2}$. By Prop \ref{pid of union}, we have
\[\pid(\P_n)=\pid(\P_{n-1}\sqcup\2)=\frac{(n^2-2n)+\bigl(2(n-1)+1\bigr)}{n^2}=\frac{n^2-1}{n^2}\,,\] and the result follows by induction.
\end{proof}

As mentioned, this maximum value can be achieved in other ways. We outline one other way here.

\begin{prop}
Let $\A$ be a BCK-algebra with $|\A|=n$ and $\pid(\A)=\frac{k}{n^2}$. Then 
\[\pid(\A\oplus\top)=\frac{k+2n+1}{(n+1)^2}\,.\]
\end{prop}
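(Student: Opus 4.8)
The plan is to count $|P(\A\oplus\top)|$ directly by a case analysis on the positions of the two arguments relative to the new top element, exactly parallel to the computation of $\cd(\A\oplus\top)$ in Proposition \ref{cd of sum}. Since Is\'{e}ki's extension adjoins a single new element, $|\A\oplus\top|=n+1$, which accounts for the denominator $(n+1)^2$; it then remains to show the numerator equals $k+2n+1$.

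First I would observe that the BCK-operation of $\A\oplus\top$ restricts to the original operation on $\A$, so a pair $(x,y)$ with $x,y\in\A$ lies in $P(\A\oplus\top)$ if and only if it lies in $P(\A)$; this contributes exactly $k$ pairs. The remaining pairs are those having $\top$ in at least one coordinate, and I would split them into three types and test (E$_1$) using only the defining rules $x\bdot\top=0$, $\top\bdot\top=0$, and $\top\bdot x=\top$. For $(x,\top)$ with $x\in\A$ we have $x\bdot\top=0$, so $(x\bdot\top)\bdot\top=0\bdot\top=0=x\bdot\top$ and the equation holds; for $(\top,y)$ with $y\in\A$ we have $\top\bdot y=\top$, so $(\top\bdot y)\bdot y=\top\bdot y=\top$ and it holds; and for $(\top,\top)$ we have $\top\bdot\top=0$, so $(\top\bdot\top)\bdot\top=0=\top\bdot\top$ and it holds. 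Thus \emph{every} pair involving $\top$ satisfies (E$_1$), giving $n+n+1=2n+1$ additional pairs and hence $|P(\A\oplus\top)|=k+2n+1$.

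The argument presents no genuine obstacle; the only care required is the bookkeeping that the three types of pairs involving $\top$ are genuinely disjoint (so nothing is double-counted) and that $0\in\A$, so that $0\bdot\top=0$ falls under the first rule. It is worth highlighting that, in contrast to the commuting degree, where $\top$ commutes with only two elements in Proposition \ref{cd of sum}, here the equation (E$_1$) is satisfied by all $2n+1$ new pairs. Finally I would remark that the resulting value reproduces the formula $\pid(\A\sqcup\2)=\frac{k+2n+1}{(n+1)^2}$ from Proposition \ref{pid of union}, so that iterating $\oplus\top$ starting from $\TC$ furnishes an alternative family of algebras of each order $n\geq 3$ attaining the upper bound $\frac{n^2-1}{n^2}$ of Theorem \ref{max pid}, which is precisely the ``other way'' advertised before the statement.
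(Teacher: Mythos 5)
Your proof is correct and follows essentially the same route as the paper: restrict to $\A$ to recover the $k$ pairs, then check directly from the rules $x\bdot\top=0$ and $\top\bdot x=\top$ that all $2n+1$ pairs involving $\top$ satisfy (E$_1$). The closing remark about iterating $\oplus\top$ from $\TC$ is exactly the construction of the family $\P_n'$ that the paper gives immediately after this proposition.
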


\begin{proof} By the definition of $\bdot$ on $\A\oplus\top$, we have
\begin{align*}
(\top\bdot x)\bdot x&=\top\bdot x\\
(x\bdot\top)\bdot \top&=0\bdot\top=0=x\bdot\top
\end{align*} for all $x\in\A$. So $(x,\top), (\top,x)\in P(\A\oplus\top)$ for all $x\in \A$, and certainly $(\top,\top)\in P(\A\oplus\top)$. 

Thus $\pid(\A\oplus\top)=\frac{k+2n+1}{(n+1)^2}$.
\end{proof}

If we now define a new family of linear BCK-algebras by
\begin{align*}
\P_3'&=\TC\\ 
\P_n'&=\P_{n-1}'\oplus\top\text{ for $n>3$\,,}
\end{align*} and mimic the proof of Theorem \ref{max pid}, we see the family $\P_n'$ also achieve the maximum positive implicative degree for each $n\geq 3$.

Next we consider the minimum positive implicative degree. Suppose $\A$ is a BCK-algebra of order $n$ with a unique atom $a$. Label the elements of $\A$ by $\{0, a, b_1, b_2, \ldots, b_{n-2}\}$. So $0<a<b_i$ for $i=1,2,\ldots, n-2$. In Table \ref{tab:pairs in P(A)} we indicate the pairs in $\A\times\A$ that must be in $P(\A)$.
\begin{table}[h]
{\centering
\begin{tabular}{c||ccccccc}
                 & 0                      & $a$       & $b_1$           & $b_2$           & $\cdots$ & $b_{n-3}$                  & $b_{n-2}$ \\\hline\hline
0               & $(0,0)$            & $(0,a)$ & $(0,b_1)$     & $(0,b_2)$     & $\cdots$ & $(0,b_{n-3})$           & $(0,b_{n-2})$       \\
$a$           & $(a,0)$            & $(a,a)$ & $(a,b_1)$     & $(a,b_2)$     & $\cdots$ & $(a,b_{n-3})$           & $(a,b_{n-2})$ \\
$b_1$       & $(b_1,0)$        &              & $(b_1,b_1)$ &                      &                &                                   &  \\
$b_2$       & $(b_2,0)$        &              &                      & $(b_2,b_2)$ &                &                                   & \\
$\vdots$   & $\vdots$          &              &                      &                     & $\ddots$ &                                   & \\
$b_{n-3}$ & $(b_{n-3},0)$  &             &                      &                     &                 & $(b_{n-3},b_{n-3})$ & \\
$b_{n-2}$ & $(b_{n-2},0)$  &             &                      &                     &                 &                                   & $(b_{n-2},b_{n-2})$
\end{tabular}
\caption{\label{tab:pairs in P(A)} pairs which must be in $P(\A)$}}
\end{table} If we let $T_k$ denote the $k^{\text{th}}$ triangular number, then there are $T_{n-2}$ empty spaces below the diagonal of the table, and $T_{n-3}$ empty spaces above the diagonal. Thus, we have the lower bound
\[\pid(\A)\geq\frac{n^2-T_{n-2}-T_{n-3}}{n^2}=\frac{4n-4}{n^2}\,.\] We will see this lower bound is sharp.

Now, if $\A$ has two atoms, say $a_1$ and $a_2$, let $\{0, a_1, a_2, b_1, b_2, \ldots, b_{n-3}\}$ be the carrier set of $\A$. Then for each $i\in\{1, 2, \ldots, n-3\}$, either $a_1<b_i$ or $a_2<b_i$, meaning either $(a_1, b_i)\in P(\A)$ or $(a_2, b_i)\in P(\A)$. Also, in any BCK-algebra we always have $x\bdot y\leq x$ for all $x$ and $y$ \cite{it78}, which forces $a_1\bdot a_2=a_1$ and $a_2\bdot a_1=a_2$. Thus, $(a_1,a_2), (a_2, a_1)\in P(\A)$, and we have 
\[|P(\A)|\geq (3n-2)+(n-3)+2=4n-3>4n-4\,.\] So the minimum value of $\pid$ can only be achieved by an algebra with a unique atom.

For $n\geq 3$, consider the $n$-element poset $Q_n$ in Figure \ref{fig:Q_n}.
\begin{figure}[h]
\centering
\begin{tikzpicture}
\filldraw (0,0) circle (2pt);
\filldraw (0,.8) circle (2pt);
\filldraw (-2,2) circle (2pt);
\filldraw (-1,2) circle (2pt);
\filldraw (0,2) circle (2pt);
\filldraw (2,2) circle (2pt);
\draw [-] (0,0) -- (0,1);
\draw [-] (0,.8) -- (-2,2);
\draw [-] (0,.8) -- (-1,2);
\draw [-] (0,.8) -- (0,2);
\draw [-] (0,.8) -- (2,2);
	\node at (0,-.4) {\small 0};
	\node at (.3,.7) {\small $a$};
	\node at (-2, 2.3) {\small $b_1$};
	\node at (-1, 2.3) {\small $b_2$};
	\node at (0, 2.3) {\small $b_3$};
\filldraw (.6,2) circle (.7pt);
\filldraw (1,2) circle (.7pt);
\filldraw (1.4,2) circle (.7pt);
	\node at (2, 2.3) {\small $b_{n-2}$};
\end{tikzpicture}
\caption{the poset $Q_n$}
\label{fig:Q_n}
\end{figure}
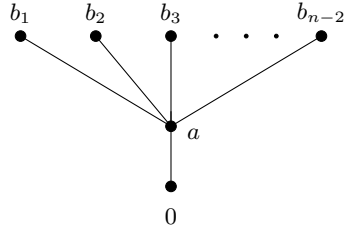 Define an operation on $Q_n$ by 
\[x\bdot y=\begin{cases} 0 &\text{if $x\leq y$}\\ x &\text{if $y=0$}\\ a &\text{if $x=b_i, y=a$}\\ a &\text{if $x=b_i, y=b_j$ with $i\neq j$}\end{cases}\,.\] This is a (commutative) BCK-algebra for each $n\geq 3$ \cite{it76}, and we will denote it by $\Q_n$. Notice that $\Q_3\cong \TC$, and that $\Q_n$ is a subalgebra of $\Q_{n+1}$.

\begin{theorem}\label{min pid atom}
For $n\geq 3$, there is a BCK-algebra $\A$ of order $n$ realizing the lower bound $\pid(\A)=\frac{4n-4}{n^2}$.
\end{theorem}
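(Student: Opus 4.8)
The plan is to show that the family $\Q_n$ defined above realizes the lower bound $\pid(\Q_n)=\frac{4n-4}{n^2}$. Since $\Q_n$ has a unique atom $a$ with $0<a<b_i$ for each $i$, Table \ref{tab:pairs in P(A)} already lists the $4n-4$ pairs that \emph{must} lie in $P(\Q_n)$; these are exactly the pairs $(0,x)$, $(x,0)$, $(x,x)$, $(a,b_i)$, and $(a,a)$ together with the pairs $(0,a)$. So it suffices to verify that \emph{no other} pair satisfies (E$_1$); that is, every entry left blank in Table \ref{tab:pairs in P(A)} genuinely fails $(x\bdot y)\bdot y = x\bdot y$.

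The blank entries are precisely the pairs $(b_i, a)$ and $(b_i, b_j)$ with $i\neq j$, for $i,j\in\{1,\ldots,n-2\}$. For these I would compute directly from the defining operation. When $x=b_i$ and $y=a$, the rule gives $b_i\bdot a = a$, and then $(b_i\bdot a)\bdot a = a\bdot a = 0$, which differs from $b_i\bdot a = a$ since $a\neq 0$. Hence $(b_i,a)\notin P(\Q_n)$. Similarly, when $x=b_i$ and $y=b_j$ with $i\neq j$, the rule gives $b_i\bdot b_j = a$, so $(b_i\bdot b_j)\bdot b_j = a\bdot b_j = 0$ (because $a\leq b_j$), again unequal to $b_i\bdot b_j = a$. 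Thus none of the off-diagonal $(b_i,b_j)$ pairs lie in $P(\Q_n)$ either.

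Counting then finishes the argument. There are exactly $n-2$ blank pairs of the form $(b_i,a)$, and $(n-2)(n-3)$ blank pairs of the form $(b_i,b_j)$ with $i\neq j$; equivalently, the total number of blanks is $T_{n-2}+T_{n-3}$ as computed in the discussion preceding Figure \ref{fig:Q_n}. Since every blank fails (E$_1$) and every non-blank entry lies in $P(\Q_n)$, we get $|P(\Q_n)| = n^2 - T_{n-2} - T_{n-3} = 4n-4$, whence $\pid(\Q_n) = \frac{4n-4}{n^2}$.

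I do not expect a serious obstacle here, since $\Q_n$ is already known to be a commutative BCK-algebra by \cite{it76} and the operation is given by an explicit case split. The only point requiring care is to confirm that the four cases in the definition of $\bdot$ on $Q_n$ cover every ordered pair unambiguously and that the values I assign to the products $b_i\bdot a$ and $b_i\bdot b_j$ match the stated rule; once that bookkeeping is pinned down, the two short computations above are routine and the count is immediate.
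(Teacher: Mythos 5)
Your proposal is correct, but it takes a different route from the paper. The paper proves this by induction on $n$: it views $\Q_{n-1}$ as a subalgebra of $\Q_n$ (so that $P(\Q_{n-1})\subseteq P(\Q_n)$), and then only examines the new pairs involving the added atom-cover $b_{n-2}$, showing exactly four of them land in $P(\Q_n)$, which gives $|P(\Q_n)|=(4(n-1)-4)+4=4n-4$. You instead do a single direct count for arbitrary $n$: you verify that every blank entry of Table \ref{tab:pairs in P(A)} genuinely fails (E$_1$) via the two computations $(b_i\bdot a)\bdot a=0\neq a$ and $(b_i\bdot b_j)\bdot b_j=a\bdot b_j=0\neq a$, and that every non-blank entry lies in $P(\Q_n)$, so $|P(\Q_n)|=n^2-T_{n-2}-T_{n-3}=n^2-(n-2)^2=4n-4$. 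Your computations and the count $(n-2)+(n-2)(n-3)=(n-2)^2=T_{n-2}+T_{n-3}$ all check out, and your concern about well-definedness of the four-case operation is easily dispatched (the only overlapping case is $(0,0)$, where the first two clauses agree). Since the operation on $\Q_n$ is given by an explicit formula uniform in $n$, your direct argument is arguably more transparent and avoids the induction entirely; the paper's inductive scheme has the mild advantage of reusing the same template across Theorems \ref{min pid atom}, \ref{min pid linear}, \ref{min id atom}, and \ref{min id linear}, where in the linear case the new pairs are genuinely easier to analyze than a full recount would be.
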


\begin{proof} We saw in Theorem \ref{max pid} that $\pid(\TC)=\frac{8}{9}=\frac{4(3)-4}{9}$.

For $n>3$, assume $\pid(\Q_{n-1})=\frac{4(n-1)-4}{(n-1)^2}=\frac{4n-8}{(n-1)^2}$. Label the elements of $\Q_n$ as $\{0, a, b_1, b_2, \ldots, b_{n-3}, b_{n-2}\}$ and view $\Q_{n-1}$ as a subalgebra of $\Q_n$ with carrier set $\{0, a, b_1, b_2, \ldots, b_{n-3}\}$. 

Then $P(\Q_{n-1})\subseteq P(\Q_n)$. The remaining pairs we need to consider must have the element $b_{n-2}$ in one of the coordinates. 

By definition of $\bdot$, we have
\begin{align*}
(b_{n-2}\bdot a)\bdot a&=a\bdot a=0\neq a=b_{n-2}\bdot a\,,\\
(b_{n-2}\bdot b_i)\bdot b_i&=a\bdot b_i=0\neq a=b_{n-2}\bdot b_i\,,\tag{$i\neq n-2$}\\
(b_i\bdot b_{n-2})\bdot b_{n-2}&=a\bdot b_{n-2}=0\neq a=b_i\bdot b_{n-2}\,,\tag{$i\neq n-2$}
\end{align*} meaning $(b_{n-2},a)\notin P(\Q_n)$ and $(b_{n-2},b_i), (b_i,b_{n-2})\notin P(\Q_n)$ for $i\neq n-2$. 

On the other hand, we know $(b_{n-2}, 0)$, $(0, b_{n-2})$, and $(b_{n-2}, b_{n-2})$ are in $P(\Q_n)$. And since $a\leq b_{n-2}$ we have $(a,b_{n-2})\in P(\Q_n)$ as well.

Thus, $\pid(\Q_n)=\frac{(4n-8)+4}{n^2}=\frac{4n-4}{n^2}$ and the result follows by induction.
\end{proof}

As a special case, suppose that $\A$ is a linear BCK-algebra of order $n$. Label the elements of $\A$ by $\{0, a_1, a_2, \ldots, a_{n-1}\}$ with $0<a_1<a_2<\cdots < a_{n-1}$. In Table \ref{tab:pairs in P(A)2} we indicate the pairs in $\A\times\A$ that must be in $P(\A)$.
\begin{table}[h]
{\centering
\begin{tabular}{c||cccccc}
                 & 0                      & $a_1$            & $a_2$           & $a_3$            & $\cdots$ & $a_{n-1}$                \\\hline\hline
0               & $(0,0)$            & $(0,a_1)$     & $(0,a_2)$      & $(0,a_3)$      & $\cdots$ & $(0,a_{n-1})$           \\
$a_1$       & $(a_1,0)$        & $(a_1,a_1)$ & $(a_1,a_2)$  & $(a_1,a_3)$  & $\cdots$ & $(a_1,a_{n-1})$           \\
$a_2$       & $(a_2,0)$        &                      & $(a_2,a_2)$  & $(a_2,a_3)$  & $\cdots$ & $(a_2,a_{n-1})$          \\
$a_3$       & $(a_3,0)$        &                      &                       & $(a_3,a_3)$  & $\cdots$ & $(a_3,a_{n-1})$                                  \\
$\vdots$   & $\vdots$          &                     &                       &                      & $\ddots$ & $\vdots$                                  \\
$a_{n-1}$ & $(a_{n-1},0)$  &                    &                       &                      &                 & $(a_{n-1},a_{n-1})$ \\
\end{tabular}
\caption{\label{tab:pairs in P(A)2} pairs which must be in $P(\A)$}}
\end{table} As we saw before, there are $T_{n-2}$ empty spaces below the diagonal of the table. Thus, we have the lower bound
\[\pid(\A)\geq\frac{n^2-T_{n-2}}{n^2}=\frac{n^2+3n-2}{2n^2}\,,\] and we will show that this lower bound is sharp.

\begin{theorem}\label{min pid linear}
Among linear BCK-algebras, there is an algebra $\A$ of order $n$ realizing the lower bound $\pid(\A)=\frac{n^2+3n-2}{2n^2}$ for $n\geq 3$.
\end{theorem}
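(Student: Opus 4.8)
The plan is to show that the algebra $\C_n$ already realizes this bound. Recall $\C_n$ is a linear commutative BCK-algebra of order $n$ with carrier $\{0,1,\ldots,n-1\}$ and operation $x\bdot y=\max\{x-y,0\}$. Writing $a_i=i$ for the chain $0<1<\cdots<n-1$, every pair counted in Table \ref{tab:pairs in P(A)2} — namely every pair $(i,j)$ with $i\leq j$ together with every pair $(i,0)$ — automatically lies in $P(\C_n)$, since these are exactly the pairs for which $i\bdot j=0$ or $j=0$. So it remains only to verify that none of the $T_{n-2}$ strictly-subdiagonal pairs $(i,j)$ with $i>j\geq 1$ lies in $P(\C_n)$; once this is done we obtain $|P(\C_n)|=n^2-T_{n-2}$ and hence $\pid(\C_n)=\frac{n^2+3n-2}{2n^2}$.

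For such a pair, $i\bdot j=i-j$ since $i>j$, and the task is to check $(i-j)\bdot j\neq i-j$. First I would compute $(i-j)\bdot j=\max\{i-2j,0\}$ and then split into two cases. If $i\geq 2j$ then $(i-j)\bdot j=i-2j$, which equals $i-j$ only when $j=0$; if $i<2j$ then $(i-j)\bdot j=0$, which equals $i-j$ only when $i=j$. Since $i>j\geq 1$ rules out both $j=0$ and $i=j$, the equation (E$_1$) fails for every such pair, as required.

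There is essentially no serious obstacle here: the entire argument reduces to the elementary case analysis above, and the count of subdiagonal entries is the triangular number $T_{n-2}=\binom{n-1}{2}$ already recorded before the theorem statement. The only points worth stating explicitly are that $\C_n$ is genuinely linear (which is noted when the family is introduced) and that it is not positive implicative for $n\geq 3$ (which the failure above confirms), so that the bound is indeed realized by an algebra in the intended class. As a sanity check, at $n=3$ this gives $\pid(\C_3)=\pid(\TC)=\frac{8}{9}$, matching the value computed in Theorem \ref{max pid}.
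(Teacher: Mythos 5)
Your proof is correct. You verify directly that $P(\C_n)$ is exactly the complement of the set of strictly subdiagonal pairs $(i,j)$ with $i>j\geq 1$: the computation $(i-j)\bdot j=\max\{i-2j,0\}$, which can equal $i-j$ only if $j=0$ or $i=j$, is exactly the right arithmetic fact, and the count $n^2-T_{n-2}=\frac{n^2+3n-2}{2}$ gives the claimed degree. The paper instead argues by induction on $n$: it takes $\pid(\C_3)=\pid(\TC)=\frac{8}{9}$ as the base case, views $\C_n$ as a subalgebra of $\C_{n+1}$ so that $P(\C_n)\subseteq P(\C_{n+1})$, and then classifies only the new pairs involving the top element $n$, using the same computation $(n\bdot x)\bdot x=\max\{n-2x,0\}\neq n-x$ that you apply to an arbitrary pair. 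So the key calculation is identical; the difference is purely organizational. Your direct count is arguably cleaner here, since $\C_n$ has a closed-form operation and the whole of $P(\C_n)$ can be described at once, and it is self-contained (it does not import the value $\pid(\TC)=\frac{8}{9}$ from Theorem \ref{max pid}). The paper's induction has the virtue of matching the recursive style used for the other families ($\B_n$, $\M_n$, $\P_n$, $\Q_n$), where no closed form is available and one genuinely must track how $P$ grows under a one-element extension. Your closing remarks — that $\C_n$ is linear and that the failure of (E$_1$) on subdiagonal pairs shows it is not positive implicative for $n\geq 3$ — correctly confirm that the bound is realized within the intended class.
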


\begin{proof} Recall the family of algebras $\C_n$ defined in Table \ref{tab:C_n}, and note that $\C_3=\TC$. We saw in Theorem \ref{max pid} that $\pid(\TC)=\frac{8}{9}=\frac{3^2+3(3)-2}{2(3)^2}$.

For $n>3$, assume $\pid(\C_{n})=\frac{n^2+3n-2}{2n^2}$. Label the elements of $\C_{n+1}$ as $\{0, 1,2,\ldots, n-1,n\}$. We can view $\C_{n}$ as a subalgebra of $\C_{n+1}$ with carrier set $\{0, 1,2,\ldots, n-1\}$. 

Then $P(\C_n)\subseteq P(\C_{n+1})$. The remaining pairs we need to consider must have the element $n$ in one of the coordinates. For any $x\in C_n$, we know that $(x,n)\in P(\C_n)$ since $x\leq n$, adding $(n+1)$-many pairs to $P(\C_{n+1})$. We also know that $(n,0)\in P(\C_{n+1})$.

On the other hand, for $x\in\{1,2,\ldots, n-1\}$ we have \[(n\bdot x)\bdot x=\left.\begin{cases} n-2x&\text{if $n>2x$}\\0&\text{if $n\leq2x$}\end{cases}\right\}\neq n-x=n\bdot x\,,\] meaning $(n,x)\notin P(\C_{n+1})$.

Thus, \begin{align*}|P(\C_{n+1})|=|P(\C_n)|+(n+1)+1&=\frac{n^2+3n-2}{2}+n+2\\
&=\frac{n^2+5n+2}{2}\\
&=\frac{(n+1)^2+3(n+1)-2}{2}\,,
\end{align*} and therefore we have $\pid(\C_{n+1})=\frac{(n+1)^2+3(n+1)-2}{2(n+1)^2}$. The result holds by induction.
\end{proof}

\begin{cor}
If $\A$ is a finite linear BCK-algebra, then $\frac{1}{2} < \pid(\A)\leq 1$.
\end{cor}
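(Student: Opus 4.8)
The statement claims that every finite linear BCK-algebra $\A$ satisfies $\frac{1}{2} < \pid(\A) \le 1$. The upper bound is immediate since $\pid$ is a probability. The plan is to establish the strict lower bound $\frac{1}{2}$, which I expect to follow directly from the linear case analysis already carried out in Theorem \ref{min pid linear} and the table preceding it.

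First I would observe that for any linear BCK-algebra $\A$ of order $n$, the counting argument preceding Theorem \ref{min pid linear} shows that every pair $(x,y)$ with $x \le y$ lies in $P(\A)$, since $x \le y$ forces $x \bdot y = 0$ and hence $(x\bdot y)\bdot y = 0 = x \bdot y$. In a linear algebra the elements form a chain $0 < a_1 < \cdots < a_{n-1}$, so the pairs $(x,y)$ with $x \le y$ are exactly those on or above the diagonal of the multiplication table. There are $T_{n-2}$ empty cells strictly below the diagonal, giving the bound $\pid(\A) \ge \frac{n^2 - T_{n-2}}{n^2} = \frac{n^2+3n-2}{2n^2}$ exhibited in the text.

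The key step is then to verify that this lower bound itself exceeds $\frac{1}{2}$ for every $n \ge 2$. I would reduce the inequality $\frac{n^2+3n-2}{2n^2} > \frac{1}{2}$ to $n^2 + 3n - 2 > n^2$, i.e. to $3n - 2 > 0$, which holds for all $n \ge 1$. Hence for every finite linear BCK-algebra we have
\[
\pid(\A) \ge \frac{n^2+3n-2}{2n^2} > \frac{1}{2}\,,
\]
and combined with the trivial upper bound $\pid(\A) \le 1$ the corollary follows.

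I do not anticipate a genuine obstacle here: the real content was already supplied by the counting in Table \ref{tab:pairs in P(A)2} and by Theorem \ref{min pid linear}, which shows the lower bound is attained by the algebras $\C_n$. The only point requiring a moment's care is that the lower bound is uniform across all orders $n \ge 2$ (not merely $n \ge 3$, where the theorem is stated), so that no small linear algebra slips below $\frac{1}{2}$; checking $n = 2$ separately gives $\pid(\2) = 1 > \frac{1}{2}$, consistent with the bound.
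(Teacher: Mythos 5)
Your proposal is correct and matches the paper's (implicit) argument: the corollary is an immediate consequence of the lower bound $\pid(\A)\geq\frac{n^2+3n-2}{2n^2}$ established by the counting in Table \ref{tab:pairs in P(A)2}, together with the observation that this exceeds $\frac{1}{2}$ since $3n-2>0$. Your extra check of the $n=2$ case is a reasonable touch of care but not a departure from the paper's route.
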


We close this section by observing that while the positive implicative equation has no finite satisfiability gap in the language of BCK-algebras, it does have a gap in the language of commutative BCK-algebras. 

\begin{cor}\label{E_1 has finite gap in cBCK}
In the language of commutative BCK-algebras, the equation (E$_1$) has finite satisfiability gap $\frac{1}{9}$.
\end{cor}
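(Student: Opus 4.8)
The plan is to apply Theorem \ref{sufficient condition} directly, since (E$_1$) is an equation in the language of commutative BCK-algebras. All that is required is the sequence $d_n:=\ds(\text{E}_1,\C_n)=\pid(\C_n)$, and this has essentially already been computed. Since $\C_2=\2$ is positive implicative we have $d_2=1$, while Theorem \ref{min pid linear} gives
\[d_n=\pid(\C_n)=\frac{n^2+3n-2}{2n^2}\qquad(n\geq 3)\,.\]

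Next I would identify the maximum of $\mathcal{D}_\phi=\{d_m\mid d_m<1\}$ for $\phi=(\text{E}_1)$. Because $d_2=1$, the index $n=2$ contributes nothing to $\mathcal{D}_\phi$; on the other hand every $d_n$ with $n\geq 3$ satisfies $d_n<1$ and hence lies in $\mathcal{D}_\phi$. Rewriting
\[d_n=\frac{1}{2}+\frac{3}{2n}-\frac{1}{n^2}\]
makes the behavior transparent: the correction term $\frac{3}{2n}-\frac{1}{n^2}$ has derivative $\frac{-3n+4}{2n^3}$, which is negative for $n>\frac{4}{3}$, so the sequence is strictly decreasing for $n\geq 3$ and tends to $\frac{1}{2}$. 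Therefore $\mathcal{D}_\phi$ attains its maximum at $n=3$, namely $d_3=\pid(\TC)=\frac{8}{9}$.

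Finally, Theorem \ref{sufficient condition} then yields that (E$_1$) has finite satisfiability gap $\epsilon=1-d_3=\frac{1}{9}$ in the language of commutative BCK-algebras, with the gap realized by $\C_3=\TC$.

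I do not anticipate a genuine obstacle, as all the substantive work is already in place: that the $\C_n$ are commutative BCK-algebras, the value of $\pid(\C_n)$, and the sufficient-condition machinery of Section 3. The only point requiring care is the monotonicity check, which must confirm that $d_3$ is an actual \emph{maximum} of $\mathcal{D}_\phi$ and not merely its supremum; the explicit rewriting above settles this, and one can cross-check against the concrete values $d_3=\frac{8}{9}$, $d_4=\frac{13}{16}$.
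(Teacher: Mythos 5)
Your proposal is correct and follows essentially the same route as the paper: it invokes the value $\pid(\C_n)=\frac{n^2+3n-2}{2n^2}$ from Theorem \ref{min pid linear}, notes the sequence is decreasing for $n\geq 3$ so that $\mathcal{D}_\phi$ attains its maximum $d_3=\frac{8}{9}$, and applies Theorem \ref{sufficient condition}. The only difference is that you make the monotonicity check explicit where the paper merely asserts it; that is a harmless (indeed welcome) elaboration.
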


\begin{proof}
We saw in Theorem \ref{min pid linear} that $\pid(\C_n)$ is a decreasing sequence, so $\mathcal{D}$ has maximum $\frac{8}{9}$. Applying Theorem \ref{sufficient condition} gives the result.
\end{proof}

\subsection{Implicative degree}\label{sec:id}

Consider a BCK-algebra $\A$ of order $n$ and recall the equation (I). Let \[I(\A)=\{\, (x,y)\in\A^2\,\mid\, x\bdot (y\bdot x)=x\,\}\,,\] and define the \emph{implicative degree of $\A$}, denoted $\id(\A)$, to be the degree of satisfiability of the equation $x\bdot (y\bdot x)=x$; that is, 
\[\id(\A)=\frac{|I(\A)|}{n^2}\,.\] One checks that all pairs of the form $(0,x)$, $(x,0)$, and $(x,x)$ are in $I(\A)$ for all $x\in \A$, so $|I(\A)|\geq 3n-2$. But also, any pair $(x,y)$ such that $y\bdot x=0$ is in $I(\A)$. That is, if $x\geq y$, then $(x,y)\in I(\A)$. Counting the elements of $I(\A)$ is therefore similar to counting the elements of $P(\A)$.

As we noted in Section \ref{sec:pid}, if all non-zero elements of $\A$ are incomparable, $\A$ admits a unique BCK-product which is implicative, so $\id(\A)=1$ for such an algebra.

Thus, in order for $\A$ to fail (I), there must be at least one pair $(x,y)$ of non-zero elements that are comparable. We will see that, among algebras that fail (I), the bounds on implicative degree are actually the same as those for positive implicative degree, and the same algebras witness those bounds.

\begin{prop}\label{id of union}
Suppose $\A$ and $\B$ are BCK-algebras with $|\A|=n$, $|\B|=m$, $\id(\A)=\frac{k}{n^2}$, and $\id(\B)=\frac{\ell}{m^2}$. Then 
\[\id(\A\sqcup\B)=\frac{k+\ell+2(n-1)(m-1)-1}{(n+m-1)^2}\,.\] Consequently, 
\[\id(\A\sqcup\2)=\frac{k+2n+1}{(n+1)^2}\,.\]
\end{prop}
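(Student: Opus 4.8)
The plan is to mimic the proof of Proposition \ref{pid of union} almost verbatim, replacing the positive implicative computation with the analogous implicative one. First I would record that $|\A\sqcup\B| = n+m-1$, since the BCK-union identifies the two zero elements but is otherwise disjoint. Among the pairs lying entirely within a single factor, the $k$ pairs counted by $\id(\A)$ together with the $\ell$ pairs counted by $\id(\B)$ count the pair $(0,0)$ twice, so they contribute $k+\ell-1$ distinct pairs to $I(\A\sqcup\B)$.

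Next I would handle the cross pairs. Take $x\in\A\setminus\{0\}$ and $y\in\B\setminus\{0\}$, so that $x$ and $y$ lie in distinct factors. Using the defining rule for $\bdot$ on the BCK-union—namely that $u\bdot v=u$ whenever $u$ and $v$ belong to different factors—I would compute $y\bdot x = y$, and hence $x\bdot(y\bdot x) = x\bdot y = x$, which shows $(x,y)\in I(\A\sqcup\B)$; a symmetric computation gives $(y,x)\in I(\A\sqcup\B)$. There are $2(n-1)(m-1)$ such cross pairs, and no further pairs can contribute, so $|I(\A\sqcup\B)| = k+\ell-1+2(n-1)(m-1)$, yielding the stated formula. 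For the consequence I would specialize to $\B=\2$: since $\2$ is implicative we have $\id(\2)=1$, so $\ell=4$ and $m=2$, and substituting these values and simplifying gives $\id(\A\sqcup\2)=\frac{k+2n+1}{(n+1)^2}$.

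The argument is entirely routine and there is no genuine obstacle; the one point requiring care is the cross-pair calculation, where one must correctly apply the piecewise definition of the BCK-union operation to the nested term $x\bdot(y\bdot x)$—in particular observing that $y\bdot x$ remains inside the factor $\B$ (equal to $y$), so that the outer product again crosses factors and collapses to $x$. This is exactly the mechanism that made the commuting and positive implicative cases work, which explains why the resulting formula coincides with those in Propositions \ref{cd of union} and \ref{pid of union}.
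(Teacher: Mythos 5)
Your proposal is correct and matches the paper's proof essentially verbatim: the paper likewise reduces to the cross-pair computation $x\bdot(y\bdot x)=x\bdot y=x$ and $y\bdot(x\bdot y)=y\bdot x=y$ for $x\in\A\setminus\{0\}$, $y\in\B\setminus\{0\}$, with the same accounting for the shared zero and the same specialization to $\B=\2$ with $\ell=4$, $m=2$.
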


\begin{proof} The proof is similar to that of Propositions \ref{cd of union} and \ref{pid of union}. By the definition of $\bdot$ on $\A\sqcup\B$, if $x\in\A\setminus\{0\}$ and $y\in \B\setminus\{0\}$, we have
\begin{align*}
x\bdot (y\bdot x)&=x\bdot y=x\\
y\bdot (x\bdot y)&=y\bdot x=y
\end{align*} and so $(x,y)$ and $(y,x)$ are in $I(\A\sqcup\B)$ for all such pairs.

Thus, \[\id(\A\sqcup\B)=\frac{k+\ell+2(n-1)(m-1)-1}{(n+m-1)^2}\,.\]
\end{proof}

\begin{theorem}\label{max id}
For each $n\geq 3$, there is a BCK-algebra $\A$ of order $n$ realizing the upper bound $\id(\A)=\frac{n^2-1}{n^2}$. Consequently the equation (I) has no finite satisfiability gap in the language of BCK-algebras.
\end{theorem}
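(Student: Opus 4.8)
The plan is to follow the proof of Theorem~\ref{max pid} essentially verbatim, since Proposition~\ref{id of union} furnishes exactly the same recursive formula $\id(\A\sqcup\2)=\frac{k+2n+1}{(n+1)^2}$ that drove the positive implicative case. As foreshadowed in the discussion preceding Proposition~\ref{id of union}, I would reuse the very same family $\P_n$, defined by $\P_3=\TC$ and $\P_n=\P_{n-1}\sqcup\2$ for $n>3$, and prove by induction on $n$ that $\id(\P_n)=\frac{n^2-1}{n^2}$.

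For the base case $n=3$, I would compute $\id(\TC)$ by hand, checking all nine pairs against the equation $x\bdot(y\bdot x)=x$. A direct inspection shows that the lone failure is the pair $(1,2)$, since $1\bdot(2\bdot 1)=1\bdot 1=0\neq 1$; every other pair lies in $I(\TC)$ because one coordinate is $0$, the two coordinates are equal, or $y\bdot x=0$. Hence $|I(\TC)|=8$ and $\id(\TC)=\frac{8}{9}=\frac{3^2-1}{3^2}$.

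For the inductive step I would assume $\id(\P_{n-1})=\frac{(n-1)^2-1}{(n-1)^2}$ and apply the consequent formula of Proposition~\ref{id of union} with $\P_{n-1}$ (of order $n-1$) in the role of $\A$. Writing $k=(n-1)^2-1=n^2-2n$, the formula yields $\id(\P_n)=\frac{(n^2-2n)+2(n-1)+1}{n^2}=\frac{n^2-1}{n^2}$, closing the induction. The satisfiability-gap assertion is then immediate: each $\P_n$ fails (I), its degree being strictly below $1$, yet $\id(\P_n)=\frac{n^2-1}{n^2}\to 1$, so no uniform $\epsilon>0$ can separate the non-implicative algebras from degree $1$. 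I anticipate no genuine obstacle; the only point demanding care is the index shift in the recursive formula --- applying it with $n-1$ substituted for $n$, so that $2(n-1)+1=2n-1$ --- the same bookkeeping subtlety present in the proof of Theorem~\ref{max pid}.
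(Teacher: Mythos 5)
Your proposal is correct and matches the paper's proof: the paper likewise reuses the family $\P_n$ from Theorem~\ref{max pid}, computes $\id(\TC)=\frac{8}{9}$ as the base case, and runs the same induction via Proposition~\ref{id of union}. Your explicit verification that $(1,2)$ is the unique failing pair in $\TC$ and your care with the index shift are both sound.
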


\begin{proof} Recall the family of algebras $\P_n$ used in Theorem \ref{max pid}. One computes $\id(\TC)=\frac{8}{9}$, and by an induction we have $\id(\P_n)=\frac{n^2-1}{n^2}$ using Proposition \ref{id of union}.
\end{proof}

We consider now the minimum implicative degree. Suppose $\A$ is a BCK-algebra of order $n$ with a unique atom $a$. The transpose of Table \ref{tab:pairs in P(A)} indicates the pairs in $\A\times\A$ that must be in $I(\A)$, so we obtain the same lower bound of $\id(\A)\geq \frac{4n-4}{n^2}$. The minimum value of $\id$ can only be achieved by an algebra with a unique atom, and the argument is analogous to that used in Section \ref{sec:pid}

\begin{theorem}\label{min id atom}
For $n\geq 3$, there is a BCK-algebra $\A$ of order $n$ realizing the lower bound $\id(\A)=\frac{4n-4}{n^2}$.
\end{theorem}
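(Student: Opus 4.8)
The plan is to reuse the family $\Q_n$ introduced in Section~\ref{sec:pid} and to compute $\id(\Q_n)$ by the same inductive bookkeeping used in the proof of Theorem~\ref{min pid atom}. Since $\Q_n$ has a unique atom, it is exactly the kind of algebra that can attain the minimum, and the transpose remark preceding this theorem already secures the lower bound $\id(\Q_n)\geq\frac{4n-4}{n^2}$; so the whole task reduces to checking that $\Q_n$ meets it. First I would record the base case $\Q_3=\TC$, for which Theorem~\ref{max id} already gives $\id(\TC)=\frac{8}{9}=\frac{4(3)-4}{9}$.

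For the induction step I would assume $|I(\Q_{n-1})|=4(n-1)-4=4n-8$ and view $\Q_{n-1}$ as the subalgebra of $\Q_n$ with carrier $\{0,a,b_1,\ldots,b_{n-3}\}$. Because the term $x\bdot(y\bdot x)$ is evaluated entirely within a subalgebra, membership in $I$ is preserved, so $I(\Q_{n-1})\subseteq I(\Q_n)$ and the only pairs left to tally are those having $b_{n-2}$ in some coordinate. A direct reading of the defining table of $\Q_n$ then settles each such pair: one finds that $(b_{n-2},0)$, $(0,b_{n-2})$, and $(b_{n-2},b_{n-2})$ always lie in $I(\Q_n)$, and that for $i\neq n-2$ neither $(b_{n-2},b_i)$ nor $(b_i,b_{n-2})$ does, since the inner product collapses to $a$ and the outer step then fails.

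The one point worth flagging -- more a subtlety than an obstacle -- is that the fourth surviving pair is the transpose of the positive implicative case: here $(b_{n-2},a)\in I(\Q_n)$ while $(a,b_{n-2})\notin I(\Q_n)$, exactly opposite to the situation for $P(\Q_n)$ in Theorem~\ref{min pid atom}. The relevant checks are
\begin{align*}
b_{n-2}\bdot(a\bdot b_{n-2})&=b_{n-2}\bdot 0=b_{n-2}\,,\\
a\bdot(b_{n-2}\bdot a)&=a\bdot a=0\neq a\,,
\end{align*}
so $(b_{n-2},a)$ is added but $(a,b_{n-2})$ is not. This is precisely why I would re-run the computation rather than quote the positive implicative result verbatim, even though the final count is identical.

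Tallying the contributions, exactly four new pairs -- $(b_{n-2},0)$, $(0,b_{n-2})$, $(b_{n-2},a)$, and $(b_{n-2},b_{n-2})$ -- are added to $I(\Q_n)$, giving $|I(\Q_n)|=(4n-8)+4=4n-4$ and hence $\id(\Q_n)=\frac{4n-4}{n^2}$. The result then follows by induction for all $n\geq 3$.
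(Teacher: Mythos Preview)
Your proposal is correct and follows essentially the same approach as the paper: both use the family $\Q_n$, establish the base case $\id(\TC)=\frac{8}{9}$, and carry out the same inductive bookkeeping on pairs involving $b_{n-2}$, arriving at the four surviving pairs $(0,b_{n-2})$, $(b_{n-2},0)$, $(b_{n-2},a)$, $(b_{n-2},b_{n-2})$. Your explicit verification that $(b_{n-2},a)\in I(\Q_n)$ while $(a,b_{n-2})\notin I(\Q_n)$---the transpose of the positive implicative case---is exactly what the paper records, and your version spells out the relevant computations in slightly more detail.
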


\begin{proof} Recall the family of algebras $\Q_n$ defined before Theorem \ref{min pid atom}. We have seen that $\id(\Q_3)=\id(\TC)=\frac{8}{9}=\frac{4(3)-4}{9}$.

The remainder of the proof follows the same way as the proof of Theorem \ref{min pid atom}: Assume $\id(\Q_{n-1})=\frac{4n-8}{n^2}$. Note that $I(\Q_{n-1})\subseteq I(\Q_n)$. It is straightforward to check that
\[(a, b_{n-2}), (b_i, b_{n-2}), (b_{n-2}, b_i)\notin I(\Q_n)\] for any $i\in\{1,2,\ldots, n-3\}$, while we must have
\[(0,b_{n-2}), (b_{n-2}, 0), (b_{n-2}, a), (b_{n-2}, b_{n-2})\in I(\Q_n)\,.\] The result follows by induction.
\end{proof}

Consider now the special case where $\A$ is linear. The transpose of Table \ref{tab:pairs in P(A)2} indicates the pairs in $\A\times\A$ that must be in $I(\A)$, and we get the lower bound $\id(\A)\geq\frac{n^2+3n-2}{2n^2}$.

\begin{theorem}\label{min id linear}
Among linear BCK-algebras, there is an algebra $\A$ of order $n$ realizing the lower bound $\pid(\A)=\frac{n^2+3n-2}{2n^2}$ for $n\geq 3$.
\end{theorem}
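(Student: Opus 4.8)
The plan is to reuse the family $\C_n$ that realized the lower bound in the positive implicative case (Theorem \ref{min pid linear}), exploiting the fact that the implicative condition is the ``transpose'' of the positive implicative one. Recall $\C_n$ has carrier $\{0,1,\ldots,n-1\}$ with $x\bdot y=\max\{x-y,0\}$, and that $\C_3=\TC$ with $\id(\TC)=\frac{8}{9}=\frac{3^2+3(3)-2}{2(3)^2}$, giving the base case. (I note the statement as printed writes $\pid(\A)$ where $\id(\A)$ is intended.)

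The key step is to characterize exactly which pairs lie in $I(\C_n)$. Computing directly: if $x\geq y$ then $y\bdot x=0$, so $x\bdot(y\bdot x)=x\bdot 0=x$ and $(x,y)\in I(\C_n)$; and if $x=0$ then $0\bdot(y\bdot 0)=0$, so $(0,y)\in I(\C_n)$ as well. Conversely, if $0<x<y$ then $y\bdot x=y-x>0$ and $x\bdot(y\bdot x)=\max\{2x-y,0\}$, which equals $x$ only when $x=0$; hence $(x,y)\notin I(\C_n)$. Therefore $(x,y)\in I(\C_n)$ if and only if $x\geq y$ or $x=0$.

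From here the count is routine. The pairs with $x\geq y$ number $T_n=\frac{n(n+1)}{2}$, and the remaining forced pairs $(0,y)$ with $y>0$ add $n-1$ more, so that
\[
|I(\C_n)|=\frac{n(n+1)}{2}+(n-1)=\frac{n^2+3n-2}{2},
\qquad
\id(\C_n)=\frac{n^2+3n-2}{2n^2}.
\]
Alternatively, one can mirror the induction used for Theorem \ref{min pid linear}: viewing $\C_n$ as a subalgebra of $\C_{n+1}$, one has $I(\C_n)\subseteq I(\C_{n+1})$, and the only new pairs involving the top element $n$ are the $n+1$ pairs $(n,y)$ (all in $I(\C_{n+1})$, since $y\bdot n=0$ forces $n\bdot(y\bdot n)=n$) together with the single pair $(0,n)$, for a total of $n+2$ new pairs, which advances the formula correctly.

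The argument presents no real obstacle beyond the single characterization computation above; since the implicative condition on $\C_n$ is genuinely dual to the positive implicative one, the combinatorics is identical to that of Theorem \ref{min pid linear}, with the forced region being the lower triangle plus first row rather than the upper triangle plus first column.
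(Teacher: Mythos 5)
Your proposal is correct. Your primary argument differs slightly in organization from the paper's: you give an exact characterization of $I(\C_n)$ --- namely $(x,y)\in I(\C_n)$ iff $x\geq y$ or $x=0$ --- and then count $\frac{n(n+1)}{2}+(n-1)=\frac{n^2+3n-2}{2}$ in closed form, whereas the paper proceeds by induction on $n$, adding the $n+2$ new pairs involving the top element at each step. The key computation is identical in both ($x\bdot(y\bdot x)=\max\{2x-y,0\}\neq x$ for $0<x<y$), and your alternative inductive argument is exactly the paper's proof; the direct count is arguably cleaner since it makes the ``lower triangle plus first row'' structure explicit and avoids carrying an induction hypothesis. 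You are also right that the $\pid(\A)$ in the theorem statement is a typo for $\id(\A)$.
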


\begin{proof} Recall the family of algebras $\C_n$ defined in Table \ref{tab:C_n}. We know $\id(\TC)=\frac{8}{9}=\frac{3^2+3(3)-2}{2(3)^2}$.

The remainder of the proof follows the proof of Theorem \ref{min pid linear}: Assume $\pid(\C_{n})=\frac{n^2+3n-2}{2n^2}$, and view $\C_{n}$ as a subalgebra of $\C_{n+1}$ so that $I(\C_n)\subseteq I(\C_{n+1})$. The remaining pairs we need to consider must have the element $n$ in one of the coordinates. The $(n+1)$-many pairs of the form $(n,x)$ must be added to $I(\C_{n+1})$, where $x\in C_{n+1}$. We also know that $(0,n)\in P(\C_{n+1})$.

On the other hand, for $x\in\{1,2,\ldots, n-1\}$ we have \[x\bdot (n\bdot x)=x\bdot (n-x)=\left.\begin{cases} 2x-n&\text{if $2x>n$}\\0&\text{if $2x\leq n$}\end{cases}\right\}\neq x\,,\] meaning $(x,n)\notin I(\C_{n+1})$.

Therefore we have \[\id(\C_{n+1})=\frac{n^2+3n-2}{2n^2}+n+2=\frac{(n+1)^2+3(n+1)-2}{2(n+1)^2}\,.\]
\end{proof}

\begin{cor}
If $\A$ is a finite linear BCK-algebra, then $\frac{1}{2} < \id(\A)\leq 1$.
\end{cor}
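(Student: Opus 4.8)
The plan is to combine the general lower bound for linear BCK-algebras established just before Theorem~\ref{min id linear} with a one-line arithmetic comparison, exactly mirroring the corresponding corollary for $\pid$.

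First I would record the upper bound, which is immediate: since $\id(\A)$ is a degree of satisfiability, it is the ratio $|I(\A)|/n^2$ of a subset cardinality to $n^2$, hence at most $1$, with equality precisely when $\A$ is implicative.

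For the lower bound, I would invoke the counting argument preceding Theorem~\ref{min id linear}. For a linear BCK-algebra $\A$ of order $n$ with $0 < a_1 < \cdots < a_{n-1}$, the transpose of Table~\ref{tab:pairs in P(A)2} forces all but $T_{n-2}$ of the $n^2$ pairs to lie in $I(\A)$, giving
\[\id(\A) \geq \frac{n^2 - T_{n-2}}{n^2} = \frac{n^2 + 3n - 2}{2n^2}.\]
It then suffices to check that this lower bound strictly exceeds $\tfrac{1}{2}$. Clearing denominators, the inequality $\frac{n^2+3n-2}{2n^2} > \frac{1}{2}$ is equivalent to $n^2 + 3n - 2 > n^2$, that is, $3n - 2 > 0$, which holds for every $n \geq 1$. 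Combining the two bounds yields $\tfrac{1}{2} < \id(\A) \leq 1$.

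I do not anticipate a genuine obstacle here, since the essential combinatorics were already carried out in the setup for Theorem~\ref{min id linear}; the only points requiring a moment's care are that the displayed lower bound is valid for all orders (including the degenerate cases $n = 1, 2$, where the formula evaluates to $1$, consistent with $\2$ being implicative) and that the resulting comparison with $\tfrac{1}{2}$ is strict rather than merely $\geq$.
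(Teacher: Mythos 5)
Your argument is correct and matches the paper's (implicit) reasoning exactly: the corollary is intended to follow from the lower bound $\id(\A)\geq\frac{n^2+3n-2}{2n^2}$ established via the transpose of Table~\ref{tab:pairs in P(A)2}, together with the observation that $\frac{n^2+3n-2}{2n^2}>\frac{1}{2}$ reduces to $3n-2>0$. Nothing further is needed.
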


\begin{cor}\label{I has finite gap in cBCK}
In the language of commutative BCK-algebras, the equation (I) has finite satisfiability gap $\frac{1}{9}$.
\end{cor}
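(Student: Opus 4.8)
The plan is to apply Theorem~\ref{sufficient condition} to the equation (I), exactly as in the proof of Corollary~\ref{E_1 has finite gap in cBCK} for (E$_1$); the only thing to supply is the behavior of the sequence $d_n := \id(\C_n)$. Theorem~\ref{sufficient condition} reduces the question of a gap among \emph{all} finite commutative BCK-algebras (via the Romanowska--Traczyk decomposition into chains, Theorem~\ref{RT80}) to identifying the maximum of the set $\mathcal{D} = \{d_m \mid d_m < 1\}$, so once this maximum is found the gap is $1$ minus it.

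First I would record the values $d_n$. Since $\C_2 = \2$ is implicative, $d_2 = \id(\C_2) = 1$ and hence $d_2 \notin \mathcal{D}$; for $n \geq 3$, Theorem~\ref{min id linear} gives $\id(\C_n) = \frac{n^2+3n-2}{2n^2}$, so in particular $d_3 = \id(\TC) = \frac{8}{9} < 1$. Thus $\mathcal{D} = \{d_n \mid n \geq 3\}$, and it remains only to see that this set has maximum $\frac{8}{9}$, attained at $n = 3$. Writing $d_n = \frac{1}{2} + \frac{3}{2n} - \frac{1}{n^2}$, a direct computation gives
\[
d_n - d_{n+1} = \frac{(n-1)(3n+2)}{2n^2(n+1)^2}\,,
\]
which is positive for every $n \geq 2$; hence $(d_n)_{n \geq 3}$ is strictly decreasing and $\max \mathcal{D} = d_3 = \frac{8}{9}$.

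Applying Theorem~\ref{sufficient condition} then yields finite satisfiability gap $\epsilon = 1 - \frac{8}{9} = \frac{1}{9}$ for (I) among commutative BCK-algebras. I expect no genuine obstacle here: all of the structural content---the reduction to the chains $\C_n$ and the exact count $\id(\C_n)$---is already established in Theorems~\ref{sufficient condition} and~\ref{min id linear}, and the lone remaining step, the monotonicity inequality above, factors cleanly, making the argument essentially identical to the positive implicative case.
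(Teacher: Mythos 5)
Your proposal is correct and follows the same route the paper intends: invoke Theorem~\ref{sufficient condition} together with the value $\id(\C_n)=\frac{n^2+3n-2}{2n^2}$ from Theorem~\ref{min id linear}, observe the sequence is decreasing so $\max\mathcal{D}=d_3=\frac{8}{9}$, and conclude the gap is $\frac{1}{9}$, exactly as in Corollary~\ref{E_1 has finite gap in cBCK}. Your explicit factorization of $d_n-d_{n+1}$ is a correct (and slightly more detailed) verification of the monotonicity the paper takes for granted.
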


\section{Appendix}

\begin{lemma}\label{D_n is BCK}
For each $n\geq 3$, the algebra $\D_n$ is a non-commutative BCK-algebra.
\end{lemma}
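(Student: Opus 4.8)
The plan is to verify the five BCK axioms directly from the definition of $\D_n$, treating the new top element $n$ separately from the elements of the subalgebra $\C_n$. Since $\C_n$ is already known to be a commutative BCK-algebra, all axioms involving only elements of $\{0,1,\ldots,n-1\}$ hold automatically; the work is to check every instance in which at least one of $x,y,z$ equals $n$. The key computational facts I would record first are: $n\bdot 0 = n$, $n\bdot(n-1)=1$, $n\bdot k = n-k-1$ for $1\le k\le n-2$, $k\bdot n = 0$ for all $k\in C_n$, and of course $n\bdot n=0$ (so (BCK3) is immediate). I would also note $x\bdot 0 = x$ holds in any BCK-algebra and confirm it extends to $x=n$.

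For (BCK4), $0\bdot x = 0$ for all $x$ reduces on $\C_n$ to the known property and for $x=n$ to $0\bdot n = 0$, which holds. For (BCK5), I would observe that $x\bdot n = 0$ for all $x$ means $n$ is incomparable to nothing but rather sits above everything except where $n\bdot y \neq 0$; concretely, $n\bdot y = 0$ only when $y = n$, so the antisymmetry condition is never violated by a pair involving $n$. The genuinely laborious axioms are (BCK1) and (BCK2), each of which becomes a case analysis over which of $x,y,z$ equal $n$. I would organize (BCK2), the statement $[x\bdot(x\bdot y)]\bdot y = 0$, into the cases $x=n$ (with $y$ ranging over $0$, $n-1$, an interior $k$, and $n$) and $y=n$ (forcing $x\bdot n = 0$, so the left side is $x\bdot n = 0$), checking for instance that when $x=n$ and $y=k$ interior, $n\bdot(n\bdot k) = n\bdot(n-k-1)=k$, so $[n\bdot(n\bdot k)]\bdot k = k\bdot k = 0$.

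The main obstacle, and where I expect most of the tedium, is (BCK1): $\bigl[(x\bdot y)\bdot(x\bdot z)\bigr]\bdot(z\bdot y)=0$. With three variables each potentially equal to $n$, and the piecewise definition of $n\bdot{-}$ splitting further according to whether the second argument is $0$, $n-1$, or interior, the number of subcases is substantial. My strategy would be to exploit the ordering $z\bdot y = 0$ whenever $z\le y$ (which collapses many subcases, since then the outer factor is $w\bdot 0 = w$ and one only needs $(x\bdot y)\bdot(x\bdot z) = 0$, i.e.\ $x\bdot y \le x\bdot z$), and to use monotonicity-type facts about the quantities $n-k-1$ to handle the remaining interior-index comparisons arithmetically rather than by brute enumeration. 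The delicate points are the boundary indices $k=n-1$ (where $n\bdot(n-1)=1$ breaks the pattern $n\bdot k = n-k-1$) and $k = 0$ (where $n\bdot 0 = n$ rather than $n-1$); I would isolate these as separate lines so the general interior computation $n\bdot k = n-k-1$ can be applied uniformly elsewhere. Finally, non-commutativity follows from a single witnessing pair, e.g.\ comparing $n\meet 0$ with $0\meet n$ or exhibiting two elements $x,y$ with $x\bdot(x\bdot y)\neq y\bdot(y\bdot x)$, which I would present as a one-line remark at the end.
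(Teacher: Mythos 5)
Your overall strategy is the same as the paper's: reduce to instances where at least one variable equals $n$ (since $\C_n$ is already a BCK-algebra), dispatch (BCK3)--(BCK5) and (BCK2) by direct computation, and handle (BCK1) by combining the collapse that occurs when $z\bdot y=0$ with monotonicity facts about $a\mapsto n-a-1$, isolating the boundary indices $0$ and $n-1$. That plan is sound and matches the published argument essentially step for step.

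The one concrete error is your proposed witness for non-commutativity. The pair you name first, comparing $n\meet 0$ with $0\meet n$, cannot work: in any BCK-algebra $x\meet 0=0\bdot(0\bdot x)=0$ and $0\meet x=x\bdot(x\bdot 0)=x\bdot x=0$, so every element commutes with $0$ and this comparison proves nothing. Since non-commutativity is an existential claim, the proof is incomplete until a genuine witness is exhibited. The pair $(n,n-1)$ works: $(n-1)\meet n=n\bdot\bigl(n\bdot(n-1)\bigr)=n\bdot 1=n-2$, while $n\meet(n-1)=(n-1)\bdot\bigl((n-1)\bdot n\bigr)=(n-1)\bdot 0=n-1$, and $n-2\neq n-1$. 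With that substitution your argument goes through; your fallback phrasing (``exhibiting two elements $x,y$ with $x\bdot(x\bdot y)\neq y\bdot(y\bdot x)$'') is the right requirement, but you still need to produce such a pair explicitly, and the one concrete candidate you offered fails.
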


\begin{proof}
It is easy to check that $\D_n$ satisfies (BCK3), (BCK4), and (BCK5). Since $\D_n$ is a one-element extension of $\C_n$, it only remains to check (BCK1) and (BCK2) with the top element $n$ in place of some (or all) of the variables $x$, $y$ and $z$.

Consider (BCK2). If $x=y=n$, or if $y=n$, it is easy to see (BCK2) is satisfied. So suppose $x=n$. If $y=0$ it is again an easy check, so assume $0<y<n$. We have \[n\bdot y=\begin{cases} n-y-1&\text{ if $y\in\{1,2,\ldots, n-2\}$}\\1&\text{ if $y=n-1$}\end{cases}\,,\] and so 
\[n\bdot(n\bdot y)=\begin{cases}y&\text{ if $y\in\{1,2,\ldots, n-2\}$}\\n-2&\text{ if $y=n-1$}\end{cases}\,.\] Thus, for $y\in\{1,2,\ldots, n-2\}$, we have $\bigl[n\bdot(n\bdot y)\bigr]\bdot y=y\bdot y=0$, and for $y=n-1$ we have $\bigl[n\bdot(n\bdot y)\bigr]\bdot y=(n-2)\bdot (n-1)=0$. So $\D_n$ satisfies (BCK2).

Showing (BCK1) is more involved. Based on how we defined $\bdot$ on $\D_n$, we make the following observations: for all $a,b,c\in \D_n$,
\begin{enumerate}
\item $a\bdot n=0$ 
\item $n\bdot a\leq n$
\item If $a\leq b$, then $a\bdot b=0$
\item If $a\leq b$, then $a\bdot c\leq b\bdot c$
\item If $a\leq b$, then $n\bdot b\leq n\bdot a$\,.
\end{enumerate}(We note that the order $\leq$ being used is the natural order. We cannot use the BCK-order on $\D_n$ yet!)

Notice also that if none of $x$, $y$, or $z$ are equal to $n$, then (BCK1) is satisfied since we are in the subalgebra $\C_n$ which is known to be a BCK-algebra. This leaves seven cases to consider. If two or more of $x$, $y$, and $z$ are equal to $n$, it is a straightforward computation to show that (BCK1) is satisfied using the observations above.

If $y=n$, then \[\bigl[(x\bdot y)\bdot(x\bdot z)\bigr]\bdot(z\bdot y)=\bigl[(x\bdot n)\bdot(x\bdot z)\bigr]\bdot(z\bdot n)=\bigl[0\bdot(x\bdot z)\bigr]\bdot(z\bdot n)=0\bdot (z\bdot n)=0\,.\]

If $z=n$, then \[\bigl[(x\bdot y)\bdot(x\bdot n)\bigr]\bdot(n\bdot y)=\bigl[(x\bdot y)\bdot 0\bigr]\bdot(n\bdot y)=(x\bdot y)\bdot(n\bdot y)=0\,.\] The last equality follows by observations (3) and (4): since $x<n$, we have $x\bdot y<n\bdot y$, and thus $(x\bdot y)\bdot (n\bdot y)=0$.

The last case to consider is the case where $x=n$. If $y=0$, then (BCK1) reduces to (BCK2) which we proved above. If $z=0$, then 
\[\bigl[(n\bdot y)\bdot(n\bdot 0)\bigr]\bdot(0\bdot y)=\bigl[(n\bdot y)\bdot n\bigr]\bdot 0=(n\bdot y)\bdot n=0\] since $n\bdot y\leq n$. We may assume then that $0<y,z<n$. If $z\leq y$, then $n\bdot y\leq n\bdot z$ by observation (5), and so
\[\bigl[(n\bdot y)\bdot(n\bdot z)\bigr]\bdot(z\bdot y)=0\bdot (z\bdot y)=0\,.\] Finally, suppose $0<y<z<n$. Note that $y\in\{1,2\ldots, n-2\}$, so $n\bdot y=n-y-1$ and $z\bdot y=z-y$. If $z=n-1$, then 
\[\bigl[(n\bdot y)\bdot(n\bdot z)\bigr]\bdot(z\bdot y)=\bigl[(n-y-1)\bdot 1\bigr]\bdot(n-1-y)=(n-y-2)\bdot (n-y-1)=0\] since $n-y-2<n-y-1$. If $z\neq n-1$, then $n\bdot z=n-z-1$, in which case $(n\bdot y)\bdot (n\bdot z)=(n-y-1)\bdot (n-z-1)=(n-y-1)-(n-z-1)=z-y$, and thus we have
\[\bigl[(n\bdot y)\bdot(n\bdot z)\bigr]\bdot(z\bdot y)=(z-y)\bdot (z-y)=0\,.\]

Hence, $\D_n$ satisfies (BCK1) and $\D_n$ is a BCK-algebra.

Lastly, we check that $(n,n-1)$ is a non-commuting pair:
\begin{align*}
(n-1)\meet n &= n\bdot \bigl(n\bdot(n-1)\bigr)=n\bdot 1=n-2\\
n\meet (n-1)&=(n-1)\bdot \bigl((n-1)\bdot n\bigr)=(n-1)\bdot 0=n-1
\end{align*} and so $\D_n$ is non-commutative.
\end{proof}

\bibliographystyle{plain}
\bibliography{sat_deg_bib}

\begin{thebibliography}{10}

\bibitem{BDN17}
D.~K. Basnet, J.~Dutta, and R.~K. Nath.
\newblock On commuting probability of finite rings.
\newblock {\em Indag. Math.}, 28(2):372--382, 2017.

\bibitem{BM13}
S.~N. Buckley and D.~MacHale.
\newblock Contrasting the commuting probabilities of groups and rings, 2013.
\newblock
  \url{https://archive.maths.nuim.ie/staff/sbuckley/Papers/bm_g-vs-r.pdf}.

\bibitem{BMS13}
S.~N. Buckley, D.~MacHale, and A.~N\'{i}~Sh\'{e}.
\newblock Finite rings with many commuting pairs of elements, 2013.
\newblock \url{https://archive.maths.nuim.ie/staff/sbuckley/Papers/bms.pdf}.

\bibitem{BK22}
B.~M. Bumpus and Z.~A. Kocsis.
\newblock Degree of satisfiability in {H}eyting algebras, 2022.
\newblock \url{https://arxiv.org/abs/2110.11515}.

\bibitem{CDM00}
R.~Cignoli, I.~D'Ottaviano, and D.~Mundici.
\newblock {\em Algebraic foundations of many-valued reasoning}, volume~7 of
  {\em Trends in Logic---Studia Logica Library}.
\newblock Kluwer Academic Publishers, Dordrecht, 2000.

\bibitem{DNP13}
A.~K. Das, R.~K. Nath, and M.~R. Pournaki.
\newblock A survey on the estimation of commutativity in finite groups.
\newblock {\em Southeast Asian Bull. Math.}, 37(2):161--180, 2013.

\bibitem{DJM20}
C.~Delizia, U.~Jezernik, P.~Moravec, and C.~Nicotera.
\newblock Gaps in probabilities of satisfying some commutator-like identities.
\newblock {\em Israel J. Math.}, 237(1):115--140, 2020.

\bibitem{evans20}
C.~M. Evans.
\newblock {\em Spectral properties of commutative {BCK}-algebras}.
\newblock PhD thesis, Binghamton University, 2020.

\bibitem{gustafson73}
W.~H. Gustafson.
\newblock What is the probability that two group elements commute?
\newblock {\em Amer. Math. Monthly}, 80:1031--1034, 1973.

\bibitem{II66}
Y.~Imai and K.~Is\'{e}ki.
\newblock On axiom systems of propositonal calculi, xiv.
\newblock {\em Proc. Japan Acad. Ser. A Math. Sci.}, 42:18--22, 1966.

\bibitem{iseki75}
K.~Is\'{e}ki.
\newblock On ideals in {BCK}-algebras.
\newblock {\em Mathematics Seminar Notes}, 3:1--12, 1975.

\bibitem{it76}
K.~Is\'{e}ki and S.~Tanaka.
\newblock Ideal theory of {BCK}-algebras.
\newblock {\em Math. Japon.}, 21(4):351--366, 1976.

\bibitem{it78}
K.~Is\'{e}ki and S.~Tanaka.
\newblock An introduction to the theory of {BCK}-algebras.
\newblock {\em Math. Japon.}, 23(1):1--26, 1978.

\bibitem{kocsis20}
Z.~A. Kocsis.
\newblock Degree of satisfiability of some special equations, 2020.
\newblock \url{https://arxiv.org/abs/2002.01773}.

\bibitem{lescot95}
P.~Lescot.
\newblock Isoclinism classes and commutativity degrees of finite groups.
\newblock {\em J. Algebra}, 177(3):847--869, 1995.

\bibitem{machale76}
D.~MacHale.
\newblock Commutativity in finite rings.
\newblock {\em Amer. Math. Monthly}, 83(1):30--32, 1975.

\bibitem{mj94}
J.~Meng and Y.~B. Jun.
\newblock {\em B{CK}-algebras}.
\newblock Kyung Moon Sa, Seoul, 1994.

\bibitem{mundici86}
D.~Mundici.
\newblock M{V}-algebras are categorically equivalent to bounded commutative
  {BCK}-algebras.
\newblock {\em Math. Japon.}, 31(6):889--894, 1986.

\bibitem{RT80}
A.~Romanowska and T.~Traczyk.
\newblock On commutative {BCK}-algebras.
\newblock {\em Math. Japon.}, 25(5):567--583, 1980.

\bibitem{traczyk79}
T.~Traczyk.
\newblock On the variety of bounded commutative {BCK}-algebras.
\newblock {\em Math. Japon.}, 24(3):283--292, 1979.

\end{thebibliography}

\end{document}